\documentclass[a4paper,11pt]{article}

\usepackage[T1]{fontenc}
\usepackage[utf8]{inputenc}
\usepackage[english]{babel}
\usepackage{lmodern}  
\usepackage{microtype}
\usepackage[margin=3cm]{geometry}

\usepackage{enumitem}
\setlist[itemize]{label= --, itemsep=0pt, topsep=5pt, leftmargin=20pt}

\usepackage{amsmath,amssymb,amsthm}
\usepackage{graphicx}
\usepackage{booktabs}
\usepackage{bussproofs}

\usepackage{hyperref}
\hypersetup{colorlinks=true, linkcolor=black, citecolor=black, urlcolor=black}
\usepackage[capitalise,nameinlink]{cleveref}
\usepackage{orcidlink}

\usepackage{csquotes}
\usepackage[backend=biber,style=numeric,sorting=nyt]{biblatex}
\theoremstyle{definition}
\newtheorem{definition}{Definition}[section]
\newtheorem{remark}[definition]{Remark}

\theoremstyle{plain}
\newtheorem{theorem}[definition]{Theorem}
\newtheorem{lemma}[definition]{Lemma}
\newtheorem{proposition}[definition]{Proposition}
\newtheorem{corollary}[definition]{Corollary}

\newenvironment{keyword}
  {\par\medskip\noindent\textbf{Keywords: }\ignorespaces}
  {\par\medskip}
\newcommand{\sep}{\enspace$\cdot$\enspace}
\newcommand{\affmark}[1]{\textsuperscript{#1}}

\let\oldtitle\title
\renewcommand{\title}[1]{%
  \oldtitle{%
    \rule{\textwidth}{0.4pt}\\[1ex]
    {\LARGE #1}\\
    \rule{\textwidth}{0.4pt}
  }%
}

\newcommand{\NN}{\ensuremath{\mathbb{N}}}

\newcommand{\CD}{\mathcal{D}}
\newcommand{\CG}{\mathcal{G}}
\newcommand{\CR}{\mathcal{R}}
\newcommand{\CI}{\mathcal{I}}
\newcommand{\CP}{\mathcal{P}}
\newcommand{\CQ}{\mathcal{Q}}

\newcommand{\FF}{\mathbf{F}}
\newcommand{\TT}{\mathbf{T}}

\newcommand{\BB}{\mathbb{B}}
\newcommand{\List}{\mathbb{L}}

\newcommand{\NAO}{\mathsf{NA}^{\omega}}
\newcommand{\MAO}{\mathsf{MA}^{\omega}}
\newcommand{\HAO}{\mathsf{HA}^{\omega}}
\newcommand{\PAO}{\mathsf{PA}^{\omega}}
\newcommand{\XAO}{\mathsf{XA}^{\omega}}

\newcommand{\term}{\mathsf{Term}}
\newcommand{\form}{\mathsf{Form}}

\newcommand{\GG}{{\neg\neg}}

\newcommand{\true}{\mathsf{tt}}
\newcommand{\false}{\mathsf{ff}}
\newcommand{\Succ}{\mathsf{S}}
\newcommand{\nil}{\mathsf{nil}}

\newcommand{\Split}{\mathsf{split}}
\newcommand{\case}{\mathsf{Cases}}
\newcommand{\rec}{\mathcal{R}}
\newcommand{\atom}[1]{\mathsf{at}(#1)}

\usepackage{marvosym}
\usepackage{fontawesome5}
\newcommand{\email}[1]{\Letter\  \href{mailto:#1}{\texttt{#1}}}
\newcommand{\website}[2]{\Mundus\ \href{#1}{\texttt{#2}}}

\title{On the Limits of Recursive Characterizations in the Refined $A$-Translation}

\author{
Franziskus Wiesnet \affmark{a} \orcidlink{0000-0003-3870-6984} 
}

\begin{document}
\maketitle
\makeatletter
\begingroup
\renewcommand{\thefootnote}{}
\renewcommand{\theHfootnote}{acknowledgements}
\long\def\@makefntext#1{\noindent #1}
\footnotemark
\footnotetext{\textbf{Acknowledgements.} The research presented in this paper was funded by the Austrian Science Fund (FWF), Grant-DOI: \href{https://www.fwf.ac.at/forschungsradar/10.55776/ESP576}{10.55776\allowbreak/ESP576}.}
\endgroup
\makeatother
\setcounter{footnote}{0}
\begin{flushleft}
\footnotesize\itshape
\affmark{a} Institut für diskrete Mathematik and Geometrie, 
TU Wien, Wiedner Hauptstraße 8-10/104, Vienna, 1040,
Austria, European Union, \email{franziskus.wiesnet@tuwien.ac.at}, \website{https://www.wiesnet.eu}{wiesnet.eu}\\
\end{flushleft}

\begin{abstract}
This paper studies the limits of recursive syntactic classifications in
proof theory and program extraction, using the refined $A$-translation as
a central example. The refined $A$-translation, due to Berger, Buchholz,
and Schwichtenberg, is based on recursively defined classes of formulas in
minimal arithmetic $\mathsf{MA}^\omega$, in particular the classes of
definite and goal formulas. One of its basic properties is that
$D[\bot := F] \to D$ is intuitionistically derivable for every definite
formula $D$.

Schwichtenberg and Wainer observed that this property also holds for formulas
outside the class of definite formulas and asked for a useful characterization
of all formulas $D$ for which $D[\bot := F] \to D$ is intuitionistically
derivable. In addition to definite formulas, the refined $A$-translation
involves three further classes of formulas satisfying related
properties. We show that none of these four properties admits a recursive
characterization.

In addition to this negative result, we extend the framework of refined
$A$-translation in two directions. First, we add conjunction $\wedge$ to
the language of $\mathsf{MA}^\omega$, whose original formulation contains
only the logical connectives $\forall$ and $\to$, and adapt the formula
classes accordingly. Second, we present the corresponding slightly extended
formulation of the refined $A$-translation theorem and discuss possible
recursive extensions of these classes.

Finally, we discuss a small prover written in Rust which implements the
theory $\mathsf{MA}^\omega$ and the four formula classes. The
prover is not used as a formal verification of the results, but
serves as a case study for examining Rust as a programming language for proof
assistants. We highlight some advantages and drawbacks of Rust in this
setting, including its strong type system, support for partial constructions,
ownership and borrowing model, modularity, and testing infrastructure.
\begin{keyword}Recursive Definitions\sep
Refined $A$-Translation\sep
Program extraction
Definite Formula\sep 
Proof Assistant\sep
Rust
\end{keyword}
\end{abstract}

\section{Background and Overview}
In proof theory, constructive logic, and logic programming one frequently
works with syntactically defined classes of formulas that guarantee certain
proof-theoretic properties. Classical examples include 
Rasiowa--Harrop formulas in intuitionistic proof theory \cite[p.~143]{troelstra1988constructivism}, as well as
Horn clause, hereditary Harrop formulas and goal formulas in logic programming \cite{miller2021survey}. Such classes provide recursively checkable sufficient conditions and
thereby connect derivability properties with the syntactic structure of
formulas.

The present paper studies this question in the setting of the refined
$A$-translation. The refined $A$-translation belongs to the tradition of
proof translations from classical to constructive systems. Its historical
origin is Friedman's $A$-translation
\cite{friedman1978classically} from 1978, which was introduced in the study of
classically and intuitionistically provably recursive functions.  It~was also independently
discovered by Dragalin \cite{dragalin1980new}. The method was further investigated only a few years later by
Leivant \cite{leivant1985syntactic} as well as by Troelstra and van Dalen \cite{troelstra1988constructivism}.

In 2002, Berger, Buchholz, and Schwichtenberg systematized the idea of a refined $A$-translation as a tool for program extraction from classical proofs \cite{berger2002refined}. This refinement was necessary in order to apply formal program extraction to the obtained constructive proofs. This idea had already appeared in an earlier study about the greatest common divisor by Berger and Schwichtenberg \cite{berger1996greatest}.
The refined $A$-translation improves the plain $A$-translation by taking the syntactic structure of formulas into
account. In particular, it distinguishes between classes of definite
formulas and goal formulas, which are defined by recursion over the formula structure. 
The corresponding classes of formulas then each satisfy a certain property
which is needed in order to prove the main theorem on the refined
$A$-translation (Corollary \ref{cor:ATrans}).

Schwichtenberg and Wainer
\cite{schwichtenberg2012proofs} observed that these properties also hold
for formulas outside the recursively defined classes and formulated the
problem of finding useful recursive characterizations of all formulas
satisfying the corresponding properties. Our main result shows that such
recursive characterizations do not exist. More precisely, assuming the
consistency of $\PAO$, none of the proof-theoretic properties
associated with the refined $A$-translation admits a recursive
characterization.

Besides this negative result, we extend the framework of the refined
$A$-translation in several directions. In particular, we extend the
language of $\MAO$ by conjunction and adapt the corresponding formula
classes accordingly. We also discuss possible recursive extensions of the
formula classes and formulate the refined $A$-translation theorem in this
extended setting.

The refined $A$-translation has been implemented and used in several
case studies in the
Minlog proof assistant (see for example \cite{berger2002refined, seisenberger2003constructive}).
In connection with the present work, we wrote our
own prover\footnote{We refer to the software only as a ``prover'' or ``proof checker'', since, unlike more comprehensive proof assistants, it merely supports the implementation of proofs and does not, for example, (yet) provide a genuine tactic script or a parser.} for the theory $\MAO$ in the Rust programming language \cite{wiesnet2026checker}. Since $\MAO$ is a relatively manageable theory, this provided a good opportunity to do so.
The Rust prover had two main purposes. First, it allowed us to investigate
how changes in the recursive definitions of the individual classes of
formulas affect the resulting theory. Second, it allowed us to examine
how well Rust is suited as a programming language for implementing a
prover.

Since the first stable version of Rust was released in 2015, whereas
programming languages typically used for proof assistants, such as Haskell
and OCaml, date back well before the year 2000, further experience with Rust
as a programming language for proof assistants still needs to be gained. This
paper aims to contribute to this process. The Rust prover is not needed for the theorems about the $A$-translation presented in this article.
It served mainly as a tool for inspiration, and the paper is written in
such a way that the first four sections can be understood without any knowledge of programming.
However, the implementation also led to some interesting insights, which
are discussed in the last section of this article. 
\medskip 

\paragraph{Organization of this article.}\smallskip

\noindent In Section~2, we introduce the theories
$\HAO$, $\PAO$, $\NAO$, and $\MAO$, together with the basic proof-theoretic
tools needed later. \smallskip

\noindent In Section~3, we define the formula classes used in the
refined $A$-translation and prove the corresponding non-recursiveness
results. \smallskip

\noindent Section~4 contains the theorem on the refined $A$-translation in
our extended setting. Then we list some applications and discuss the impact of the negative statement. \smallskip

\noindent Finally, in Section~5, we discuss the Rust
implementation and some observations concerning Rust as a programming
language for proof assistants.
\section{Systems of Arithmetic with Finite Types}
In this section, we introduce \emph{Heyting Arithmetic} $\HAO$ with finite types, its classical counterpart \emph{Peano Arithmetic} $\PAO$ with finite types, and two fragments of $\HAO$ -- the \emph{Negative Arithmetic} $\NAO$ and the \emph{Minimal Arithmetic} $\MAO$.  The language of the two fragments is restricted to negative formulas only. In particular, there is no strong disjunction $\vee$ and no strong existential quantifier $\exists$. In contrast to the other sources in which these fragments are defined such as \cite{berger2002refined},\cite[Section 7.3]{schwichtenberg2012proofs} and \cite[Section 1.3]{trifonov2012analysis}, we include conjunction in the definition. In \cite[Section 2]{ratiu2011refinement}, conjunction was also already added to $\NAO$; however, it was not considered in the definition of definite, goal, relevant, and irrelevant formulas \cite[Definition 3.5]{ratiu2011refinement}. Thus, Definition \ref{defi:DGRI} in our paper is a slight extension of the theory. We will see later that this makes no difference in terms of logical content, but makes the formulas themselves somewhat easier to understand.

\subsection{Formula Language}
We begin by defining types and typed terms. This definition applies to each of the four theories mentioned above, therefore we just talk about ``types'' and ``terms'' without mentioning the theory explicitly.
\begin{definition}[Types]
To define types we fix a countable list of type variables, which we usually denote by $\xi$ and $\zeta$. Furthermore, $\BB$ denotes the type of booleans, $\NN$ the type of natural numbers and $\List(\tau)$ the type of lists of $\tau$. Types are then recursively defined by
\begin{align*}
\tau, \rho ::= \xi \mid \BB \mid \NN \mid \List(\tau) \mid \tau \to \rho  \mid \tau \times \rho.
\end{align*}
\end{definition}
\begin{remark}
In the above definition, we used type variables. In what follows, we will introduce further kinds of variables. Variables occur either bound or free, and free variables may be substituted. If $E$ is an expression (i.e.~a type, term, formula or proof), $x$ is a variable, and $t$ is another expression, then $E[x := t]$ denotes the substitution of all free occurrences of $x$ by $t$. In a given context, we use the following convention: if an expression is introduced as $E(x)$ and we later write $E(t)$, then we mean $E[x := t]$. Note that the notation $E(x)$ is used regardless of whether $x$ occurs freely in $E$ or not.

Since the concept of bound and free variables and substitution as well as $\alpha$-equivalence is intuitively easy to understand, we refrain from giving a formal definition here. A human-readable description of these concepts can be found, for example, in \cite[Section 1.1.2]{trifonov2012analysis}. It should be noted, however, that in particular a fully formal definition of substitution is considerably more complex than it is often presented to be, since unintended variable collisions may occur. For example, if $x$ is a variable of type $\xi$, then, when substituting $\xi$ by another type $\tau$, one has to ensure that $x$ is renamed in such a way that it does not collide with a variable that originally had type $\tau$. For a human working with pen and paper, this is quite clear; for a
computer, however, the precise implementation is more complex. For an exact definition, we refer to the corresponding part of the code of our Rust prover \cite{wiesnet2026checker}.

\end{remark}
\begin{definition}[Terms]
For each type $\tau$ we fix a countable set of object variables denoted by $x^\tau,y^\tau, z^\tau$.
Furthermore, we have the following typed constructors:
\begin{itemize}
\item the pair constructor $\langle\cdot,\cdot \rangle : \tau \to \rho \to \tau\times\rho$ for any types $\tau$ and $\rho$.
\item $\true: \BB$ and $\false : \BB$ denoting truth and falsity,
\item zero $0:\NN$ and the successor $\Succ: \NN \to \NN$,
\item the empty list $\nil_\tau: \List(\tau)$ and the list constructor $\cdot ::_\tau \cdot : \tau \to \List(\tau)\to \List(\tau)$ for any type $\tau$,
\end{itemize}
and the following typed destructors:
\begin{itemize}
\item the pair splitting $\Split_{\rho,\sigma}^\tau:\rho\times\sigma \to (\rho\to\sigma \to \tau) \to \tau$ for any types $\rho,\sigma,\tau$,
\item the if-operator $\case^\tau: \BB \to\tau\to\tau\to\tau$ for any type $\tau$,
\item the recursion for natural numbers $\rec^\tau_\NN:\NN \to \tau\to (\NN\to\tau\to\tau) \to \tau$ for any type $\tau$,
\item the recursion for lists  $\rec^\tau_{\List(\rho)}:\List(\rho) \to \tau\to (\rho\to\List(\rho)\to\tau\to\tau) \to \tau$ for any types $\tau,\rho$.
\end{itemize} 
Typed terms are then defined recursively by the following rules:
\begin{itemize}
\item Each object variable, constructor and destructor is a term with its type given above.
\item If $t : \tau\to \rho $ and $s:\tau$ are terms, then also $ts : \rho$ is a term.
\item If $x: \tau$ is an object variable and $t: \rho$ is a term, then $\lambda_xt : \tau \to \rho$ is a  term.
\end{itemize}
We denote the set of all terms by $\term$.
\end{definition}
\begin{remark}
It should be noted that there are the usual conversion rules for the terms defined above. For example $$(\lambda_xt(x))(s) \mapsto t(s)\qquad\text{and}\qquad \case^\tau\,\true\,t\,s \mapsto t.$$ For our purposes, however, these are not needed.  We therefore omit a detailed discussion and refer the reader to \cite[Section 1.2.1]{trifonov2012analysis} or \cite[Section 2.1]{ratiu2011refinement}. The same applies to the proof terms from Definition~\ref{def:ProofTerms}.
\end{remark}
\begin{definition}[Formulas]
Formulas in $\NAO$ are defined by:
\begin{itemize}
\item For each boolean term $t:\BB$ the atomic formula $\atom{t}$ is a formula.
\item If $A$ and $B$ are formulas, then so are $A\to B$ and $A\wedge B$.
\item If $A$ is a formula and $x$ is an object variable, then $\forall_x A$ is a formula. 
\end{itemize}
As abbreviations, we define $\TT := \atom\true$ and $\FF := \atom\false$. Negation $\neg$ is defined by $\neg A := A\to \FF$.\\
To define formulas in $\MAO$ we fix a new symbol $\bot$ and add the clause that $\bot$ is also a formula.

To define formulas in $\HAO$ and $\PAO$ we add the following rules to $\NAO$:
\begin{itemize}
\item If $A$ and $B$ are formulas, then so is $A\vee B$.
\item If $A$ is a formula and $x$ is an object variable, then $\exists_xA$ is a formula.
\end{itemize}
For $\XAO\in\{\NAO,\MAO,\HAO,\PAO\}$ we denote the sets of all formulas in $\XAO$ by $\form_{\XAO}$. As $\form_{\HAO}=\form_{\PAO}$ we often denote them just by $\form$.
\end{definition}

Note that disjunction and the existential quantifier do not occur in their strong form. However, the weak forms $\tilde{\vee}$ and $\tilde{\exists}$ can be defined by 
\begin{align*}
A\tilde\vee B &:= \neg (\neg A \wedge \neg B),\\
\tilde{\exists}_xA &:= \neg\forall_x\neg A
\end{align*} A weak form of conjunction could also be defined by $$A\tilde{\wedge} B := \neg(A\to \neg B).$$
Furthermore, there are no predicate variables in $\NAO$ and in $\MAO$ only $\bot$ functions as a predicate variable. 
This functionality is shown in the following definition:
\begin{definition}
For formulas $S$ and $A$ we define the substitution $A^S$ of $\bot$ by $S$ as follows:
\begin{align*}
\bot^S &:= S,\\
\atom t^S &:= \atom t,\\
(A\circ B)^S & := A^S \circ B^S \text{ for } \circ\in\{\wedge, \to\},\\
(\forall_x A)^S & := \forall_x A^S.
\end{align*}
To avoid collisions with free variables in $S$ one might have to rename $x$ in the last rule.
\end{definition}

To conclude the description of the theories, we now define formal proofs. These are based on Gentzen's calculus of natural deduction \cite{gentzen1935untersuchungen}. Here we use the compact representation by proof terms.

\begin{definition}[Proof terms]
\label{def:ProofTerms}
For each formula $A$ we fix a countable set of assumption variables usually denoted by $u^A,v^A,w^A,\cdots$.
Furthermore we have the following axioms in $\NAO$:
\begin{itemize}
\item $\mathsf{Truth} : \TT$
\item $\mathcal{C}^{b,A} : \forall_b(A(\true) \to A(\false) \to A(b))$
\item $\mathsf{Ind}_\NN^{n,A}: \forall_n(A(0) \to \forall_n(A(n)\to A(\Succ n)) \to A(n))$
\item $\mathsf{Ind}_{\List(\rho)}^{l,A} : \forall_l(A(\nil)\to \forall_{x,l}(A(l) \to A(x::l)) \to A(l))$
\end{itemize}
Proof terms are then defined as follows:
\begin{itemize}
\item Each assumption variable is a proof of its formula.
\item Each axiom is a proof of its formula.
\item If $M$ is a proof of $A$ and $N$ is a proof of $B$, then $(M,N)$ is a proof of $A\wedge B$.
\item If $M$ is a proof of $A\wedge B$, then $\pi_0M$ is a proof of $A$ and $\pi_1M$ is a proof of $B$.
\item If $M$ is a proof of $A\to B$ and $N$ is a proof of $A$, then $MN$ is a proof of $B$.
\item If $M$ is a proof of $B$ and $u$ is an assumption variable of $A$, then $\lambda_uM$ is a proof of $A\to B$.
\item If $M$ is a proof of $\forall_xA(x)$ and $t$ is a term with the same type as $x$, then $Mt$ is a proof of $A(t)$.
\item If $M$ is a proof of $A$ and $x$ is a variable which is not free in any free assumption of $M$, then $\lambda_x M$ is a proof of $\forall_x A$.
\end{itemize}
\medskip
To define proofs in $\MAO$ we add the axiom
\begin{align*}
\bot^+: \FF \to\bot
\end{align*}
to the axioms above. The rules for proof terms in $\MAO$ are then the same as the rules for proof terms in $\NAO$.
\medskip\\
To define proofs in $\HAO$ we take the axioms and rules of $\NAO$ and add the following axioms:
\begin{itemize}
\item $\vee_0^+: A\to A\vee B$
\item $\vee_1^+: B\to A\vee B$
\item $\vee^-: A\vee B \to (A\to C) \to (B\to C) \to C$
\item $\exists^+: A(t) \to \exists_xA(x)$, where $t$ is a term and $x$ is a variable with the same type.
\item $\exists^- : \exists_xA \to \forall_x(A \to C) \to C$, where $x$ is a variable which is not free in $C$.
\end{itemize}
\medskip The theory $\PAO$ is then $\HAO$ together with the law of excluded middle
\begin{align*}
\mathsf{lem}: A\vee \neg A
\end{align*}
added as axiom.
\end{definition}

For $\XAO\in \{\NAO,\MAO,\HAO,\PAO\}$ we write $\XAO\vdash A$ if there is a proof term of $A$ without free assumption variables.  Furthermore, we write $\XAO \vdash A\leftrightarrow B$ for $\XAO \vdash (A\to B) \wedge (B\to A)$ or equivalently $\XAO\vdash A\to B$ and $\XAO \vdash B\to A$.

\begin{remark}
In contrast to the proof terms defined above, proofs are usually represented in practice as trees. This, however, is merely a matter of presentation, and the two representations are equivalent, as can be seen, for example, from \cite[Table 2.1]{ratiu2011refinement}. Since the representation by proof terms is somewhat more compact, we present the proofs in this form here.

In the course of this paper, we will repeatedly prove derivability statements, that is, statements of the form $\XAO \vdash A$. However, since our proofs are intended to be understandable for human readers, we will give proof terms only in simple cases and otherwise use natural language. From the above definition, however, we can see quite clearly which arguments may be used and how. Nevertheless, some statements are also proved by induction on a proof. In this case, we naturally consider each axiom and each rule one by one.
\end{remark}

\subsection{Useful Lemmas}
The following two lemmas will be used throughout the article:
\begin{lemma}[Ex-falso]
\label{lem:efq}
For $\XAO \in \{\NAO,\MAO,\HAO,\PAO\}$, if A is a formula of $\mathsf{XA}^\omega$, then $\FF\to A$ is provable in $\mathsf{XA}^\omega$.
\end{lemma}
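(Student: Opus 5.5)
Since $\NAO \subseteq \MAO \subseteq \HAO \subseteq \PAO$ share the axioms and rules that matter here, I will argue by induction on the formula $A$ and use only the rules available in each system. In the step for $\bot$ (only in $\MAO$) I use the extra axiom. In the steps for $\vee$ and $\exists$ (only in $\HAO$ and $\PAO$) I use the corresponding introduction axioms. This gives one uniform argument for every $\XAO$.

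\textbf{Atomic case.} This is the main obstacle. Let $A = \atom t$ with $t:\BB$. I would first prove $\FF \to \atom{b}$ for a boolean variable $b$ by the case axiom $\mathcal{C}^{b,\FF\to\atom b}$. This reduces the goal to $\FF\to\TT$ and $\FF\to\FF$. The first follows from $\lambda_u\mathsf{Truth}$ and the second from $\lambda_u u$. Then I instantiate $\forall_b(\FF\to\atom b)$ at $t$.

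The subtle point is that $\atom{t}[b:=t]$ should be $\atom{t}$ only syntactically, with $\atom{b}$ as the formula $A(b)$. That is fine, since $A(b) := \FF\to\atom b$ and instantiating at $t$ gives exactly $\FF\to\atom t$. So no conversion rules are needed.

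\textbf{Remaining cases.}
\begin{itemize}
\item For $\bot$ in $\MAO$, the axiom $\bot^+$ is the required proof.
\item For $A\to B$, the proof is $\lambda_u\lambda_v (M_B u)$, where $M_B:\FF\to B$ is given by the induction hypothesis.
\item For $A\wedge B$, the proof is $\lambda_u (M_A u, M_B u)$.
\item For $\forall_x A$, the proof is $\lambda_u\lambda_x (M_A u)$. The variable condition holds because $M_A$ is closed and $u:\FF$ has no free object variables. I may rename $x$ if necessary, since $\FF$ contains no variables anyway.
\item In $\HAO$ and $\PAO$, for $A\vee B$ I compose $\vee_0^+$ with $M_A$.
\item For $\exists_xA$, I take $\lambda_u\, \exists^+ (M_{A(x)} u)$, with witness $x$ itself, which is a term of the correct type.
\end{itemize}

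The only point to check is that the induction hypothesis covers $A(t)$ for arbitrary substitutions. It does, because induction on formula structure is uniform and the proofs above never depend on free variables.
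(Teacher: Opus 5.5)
Your proof is correct and is essentially the paper's proof: induction on $A$ with the same case analysis, using $\bot^+$, $\vee_0^+$ and $\exists^+$ exactly as the paper does. The only variation is in the atomic case, where the paper instantiates $\mathcal{C}^{b,\atom{b}}$ at $t$ to get $\TT\to\FF\to\atom{t}$ and discharges $\TT$ by $\mathsf{Truth}$; this lets the $\FF$-premise of the case axiom itself serve as the hypothesis, which is slightly shorter than your route through $\mathcal{C}^{b,\FF\to\atom{b}}$. (One minor point: $\MAO\subseteq\HAO$ does not hold literally, since $\bot$ is not a formula of $\HAO$, but your argument never uses this inclusion.)
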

\begin{proof}
The proof is by induction on $A$. \\
The case $A = \bot$ is only necessary in $\MAO$ and follows directly from $\bot^+$. \\
If $A=\atom{t}$ for some $t:\BB$, we apply $t$ to $\mathcal{C}^{b,\atom b}$ and get $\TT \to \FF \to \atom t$. As $\TT$ holds by $\mathsf{Truth}$, we exactly get $\FF \to \atom t$.\\
If $A=B\to C$, we get $\FF \to A$ directly from the induction hypothesis $\FF\to C$. \\
If $A = B\wedge C$, we get $\FF \to A$ directly from the induction hypotheses $\FF\to B$ and $\FF\to C$.\\
If $A=\forall_xB$, we get a proof of $\FF \to B$ without any free assumption (and therefore no free assumptions on $x$). Hence, we get a proof of $\FF\to\forall_xB$.\\
For $\HAO$ and $\PAO$ we finally consider $A=B \vee C$ and $A=\exists_xB$. By the induction hypothesis, we have in both cases $\FF\to B$. Using $\vee_0^+$ we get $\FF \to B\vee C$, and by $\exists^+$ we get $\FF \to\exists_xB$.
\end{proof}

\begin{lemma}
\label{lem:substbot}
For $\XAO\in\{\HAO,\MAO,\NAO\}$, let $S$ be a formula of  $\XAO$, $A$ a formula in $\MAO$ and assume that there is a proof of $A$ in $\MAO$ with free assumptions $(u_1:A_1),\dots, (u_n:A_n)$, then there is a proof of $A^S$ in $\XAO$  with  free assumptions $(\tilde{u}_1:A_1^S),\dots, (\tilde{u}_n:A_n^S)$, where $\tilde{u}_1,\dots,\tilde{u}_n$ are new assumption variables.
\end{lemma}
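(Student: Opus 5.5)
We proceed by induction on the proof term $M$ of $A$ in $\MAO$, and build a proof term $M^S$ of $A^S$ in $\XAO$. Assumption variables are translated uniformly: each $u_i^{A_i}$ is sent to a fresh $\tilde u_i^{A_i^S}$. Bound assumption variables inside $M$ are handled the same way, so the map $u\mapsto\tilde u$ must be injective. The translation is then defined clause by clause, following the definition of proof terms, and we check in each case that the resulting term has formula $A^S$ and that its free assumptions are exactly the translated ones.

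\textbf{Axioms.} For $\mathsf{Truth}$ nothing changes, since $\TT^S=\TT$. The axioms $\mathcal{C}^{b,B}$, $\mathsf{Ind}_\NN^{n,B}$ and $\mathsf{Ind}_{\List(\rho)}^{l,B}$ of $\MAO$ have the same shape in $\XAO$ with $B$ replaced by $B^S$. This is because $(\cdot)^S$ commutes with substitution of object terms: $(B(t))^S=B^S(t)$, provided bound variables of $B$ are renamed away from the free variables of $S$. For example, $(\mathcal{C}^{b,B})^S$ is literally $\mathcal{C}^{b,B^S}$, which is an axiom of $\XAO$ because $B^S$ is a formula of $\XAO$. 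The only genuinely new case is $\bot^+:\FF\to\bot$, whose translation is $\FF\to S$. Here we invoke Lemma~\ref{lem:efq} for $\XAO$ to get a closed proof of $\FF\to S$; it adds no free assumptions.

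\textbf{Rules.} The rules translate homomorphically:
\begin{itemize}
\item $(N,K)\mapsto(N^S,K^S)$;
\item $\pi_iN\mapsto\pi_iN^S$;
\item $NK\mapsto N^SK^S$;
\item $\lambda_{u^B}N\mapsto\lambda_{\tilde u^{B^S}}N^S$;
\item $Nt\mapsto N^St$, using $(B(t))^S=B^S(t)$ again;
\item $\lambda_xN\mapsto\lambda_xN^S$.
\end{itemize}
For the last rule we must check the eigenvariable condition: $x$ may not be free in any free assumption $\tilde u_j:A_j^S$ of $N^S$. The variable $x$ is not free in $A_j$ by hypothesis. It could, however, be free in $S$, and then in $A_j^S$ whenever $\bot$ occurs in $A_j$.

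\textbf{Main obstacle.} This variable issue is the delicate step, both in the $\forall$-introduction case and in the $(B(t))^S$ identity. We resolve it by first renaming bound object variables of $M$, using $\alpha$-equivalence, so that no variable bound by a $\lambda_x$ in $M$, or by a quantifier in the formulas occurring in $M$, is free in $S$. This is possible because $S$ has only finitely many free variables. The statement is about the existence of a proof of $A^S$ under the stated assumptions, and the free variables of $A$ and the $A_i$ are untouched by this renaming, so it loses nothing. After the renaming, every eigenvariable avoids $S$ and the condition transfers.

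For $\XAO=\MAO$ the translated axioms are also $\MAO$-axioms, and $\bot^+$ is handled by Lemma~\ref{lem:efq} for $\MAO$. For $\NAO$ and $\HAO$ only $\NAO$-axioms are used, so the same argument works uniformly.
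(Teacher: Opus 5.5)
Your proposal is correct and follows the paper's proof: induction on the derivation, instantiating the axiom schemas at $B^S$, replacing $\bot^+$ by the closed ex-falso proof of $\FF\to S$ from Lemma~\ref{lem:efq}, and translating the rules homomorphically. The only inessential difference is in variable hygiene. The paper renames the free variables of $S$ to fresh ones and substitutes them back at the end. You instead $\alpha$-rename the bound variables of $M$ away from the free variables of $S$, which secures the eigenvariable condition and the identity $(B(t))^S=B^S(t)$ without a final back-substitution.
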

\begin{proof}
First, we rename all variables occurring in $S$ to fresh variables, so that no variable clashes can occur in what follows. After the construction, we substitute the free variables in $S$ back by the original variables. We may therefore assume that $S$ contains only variables which occur neither in $A$ nor in the derivation of $A$.
\medskip\\
We proceed by induction on the derivation $M$ of $A$.

If $M = u : A$ is an assumption variable, we replace it by a new assumption variable $\tilde{u}:A^S$.

If $M=\bot^+$, then $A=\FF\to \bot$. In this case, a proof of
$
A^S=\FF\to S
$
is given by Lemma~\ref{lem:efq}.

If $M$ is any other axiom, that is, if
\[
M=\mathcal{C}^{b,B}, \qquad
M=\mathsf{Ind}_\NN^{n,B},
\qquad\text{or}\qquad
M=\mathsf{Ind}_{\List(\rho)}^{l,B}
\]
for some formula $B$, then we simply replace $B$ by $B^S$. By the remark at the beginning of the proof, no variable collisions occur
in this process.

If $M=\lambda_x N$, then $A=\forall_x B$, and $N$ is a derivation of $B$. Moreover, $x$ is not free in any free assumption of $N$. By the induction hypothesis, we obtain a derivation $\tilde{N}$ of $B^S$. As explained at the beginning of the proof, we may assume that $x$ is not free in $S$. Since $x$ is also not free in any free assumption of $N$, we may assume that $x$ is not free in any free assumption of $\tilde{N}$. Hence,
$
\lambda_x \tilde{N}
$
is the desired derivation.

If $M=Nt$ for some term $t$, then there is some formula $B=B(x)$ with $A=B(t)$, and $N$ is a derivation of $\forall_x B$. By the induction hypothesis, we obtain a derivation $\tilde{N}$ of
$
\forall_x B^S.
$
By the remark at the beginning of this proof, $x$ does not occur in $S$, and therefore we have
\[
B^S(t)=(B(t))^S.
\]
Thus,
$
\tilde{N}t
$
is the desired derivation of $(B(t))^S$.

If $M=\lambda_u N$, then $A=B\to C$, where $u$ is an assumption variable of $B$, and $N$ is a derivation of $C$. By the induction hypothesis, we obtain a derivation $\tilde{N}$ of $C^S$. If $u$ occurs as a free assumption in $N$, then it has been replaced by some assumption variable $\tilde{u}:B^S$. Thus,
$
\lambda_{\tilde{u}}\tilde{N}
$
is the desired derivation. If $u$ does not occur as a free assumption in $N$, we use the same term $\lambda_{\tilde{u}}\tilde{N}$, where in this case $\tilde{u}$ is a fresh assumption variable.

The proofs for the rules of $\wedge$ and the proof for elimination rule of $\to$ follow directly from the induction hypothesis. 
\end{proof}

\subsection{Connection between Classical and Intuitionistic Arithmetic}
\begin{definition}
For any formula $A$ in $\PAO$, its negative translation $A^\GG$ (also called Gödel-Gentzen translation) is recursively defined as follows:
\begin{align*}
P^\GG& := \neg\neg P \quad \text{for atomic }P\neq \FF,\\
\FF^\GG &:= \FF\\
(A\circ B )^\GG&:= A^\GG \circ B^\GG\quad \text{for }\circ\in{\rightarrow, \wedge}\\
(\forall_xA)^\GG &:= \forall_xA^\GG,\\
(A\vee B)^\GG &:= \neg(\neg A^\GG\wedge \neg B^\GG),\\
(\exists_xA)^\GG & := \neg\forall_x\neg A ^\GG,
\end{align*}
\end{definition}

We see that the Gödel--Gentzen translation translates a formula in the language of $\HAO$ or $\PAO$ into a formula of $\NAO$.
If, as in the original sources, we had defined $\NAO$ without conjunction, then we could just define $(A\wedge B)^\GG:= \neg(A^\GG\to B^\GG \to \FF)$.

\begin{theorem}
\label{thm:goedelgentzen}
Let $A$ be any statement in Peano arithmetic, then 
\begin{align*}
\PAO \vdash A \quad  \Longleftrightarrow\quad \NAO \vdash A^{\neg\neg}.
\end{align*}
\end{theorem}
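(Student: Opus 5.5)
We prove the two directions separately. The easy direction is "$\Leftarrow$". First, every $\NAO$-proof is already a $\PAO$-proof, since $\NAO$ has a subset of the axioms and the same rules. Second, we show $\PAO \vdash A \leftrightarrow A^\GG$ for every $A$, by induction on $A$. In the atomic case we use that $\neg\neg P \to P$ holds classically via $\mathsf{lem}$ and $\vee^-$ together with Lemma~\ref{lem:efq}. The clauses for $\vee$ and $\exists$ are the classical equivalences $A\vee B \leftrightarrow \neg(\neg A\wedge\neg B)$ and $\exists_x A\leftrightarrow\neg\forall_x\neg A$. The remaining clauses are congruences. Combining both facts, $\NAO\vdash A^\GG$ yields $\PAO\vdash A$.

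For "$\Rightarrow$" we proceed by induction on the $\PAO$-proof $M$ of $A$, with free assumptions $u_i:A_i$. We show that there is an $\NAO$-proof of $A^\GG$ from assumptions $A_i^\GG$. The key auxiliary lemma is \emph{stability}: $\NAO\vdash \neg\neg A^\GG \to A^\GG$ for every $A$. We prove it by induction on the structure of the $\NAO$-formula $A^\GG$, more precisely for all formulas built from $\FF$, doubly negated atoms, $\neg$-formulas, $\to$, $\wedge$ and $\forall$:
\begin{itemize}
\item $\neg\neg\FF\to\FF$ follows by applying the hypothesis to the identity.
\item $\neg\neg\neg B\to\neg B$ is the standard triple-negation argument.
\item The cases $B\to C$, $B\wedge C$ and $\forall_x B$ follow from the induction hypothesis, using $\neg\neg(B\to C)\to B\to\neg\neg C$ and the analogous facts for $\wedge$ and $\forall$.
\end{itemize}
Here we have to be careful: $\neg\neg P$ for $P=\atom t$ is stable by the triple-negation argument. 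This is exactly why atoms other than $\FF$ are doubly negated in the translation.

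With stability in hand, the rules for $\to,\wedge,\forall$ translate directly to the same rules applied to the translated formulas. Substitution commutes with $(\cdot)^\GG$, which handles $\forall$-elimination and the instances of the induction and case axioms. The axioms need individual treatment:
\begin{itemize}
\item $\mathsf{Truth}^\GG=\neg\neg\TT$ is immediate.
\item $\mathcal{C}$, $\mathsf{Ind}_\NN$ and $\mathsf{Ind}_{\List(\rho)}$ are translated into instances of the same axiom schemas for the formula $A^\GG$, so they remain axioms of $\NAO$.
\item $\vee^+_i$ and $\exists^+$ are easy.
\item $\vee^-$ and $\exists^-$ require deriving $C^\GG$ from $\neg\neg$-forms. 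We first obtain $\neg\neg C^\GG$ and then apply stability.
\item $\mathsf{lem}^\GG=\neg(\neg A^\GG\wedge\neg\neg A^\GG)$ is provable outright.
\end{itemize}

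The main obstacle is the stability lemma together with the $\vee^-$ and $\exists^-$ cases. There are also minor bookkeeping points: commuting substitution with the translation, and an atomic $\atom t$ that becomes $\FF$ after substitution, where $\neg\neg\FF\leftrightarrow\FF$ bridges the gap. I would settle stability first and then run the routine induction over proofs.
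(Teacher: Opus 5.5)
Your proposal is correct. It is the standard argument from the references the paper cites instead of giving its own proof: stability $\neg\neg A^{\neg\neg}\to A^{\neg\neg}$ in the negative fragment, then induction on $\PAO$-derivations, and $\PAO\vdash A\leftrightarrow A^{\neg\neg}$ for the converse. You adapt it correctly to the axioms $\mathsf{Truth}$, $\mathcal{C}$, $\mathsf{Ind}$ and $\mathsf{lem}$.

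One minor imprecision: the translated $\mathcal{C}$-axiom is not literally an axiom instance for $A^{\neg\neg}$. The same applies to $\forall$-elimination or $\exists^+$ at the term $\false$. These only give formulas provably equivalent to such instances via $\neg\neg\FF\leftrightarrow\FF$, which your bookkeeping remark already covers.
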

\begin{proof}
See for example \cite[Section 2.3]{troelstra2000basic} or \cite[Ch.~2, Sect.~3]{troelstra1988constructivism}.
\end{proof}
The theorem above shows that classical logic (in our case \(\PAO\)) can be
embedded into the negative fragment of intuitionistic logic (in our case
\(\NAO\)).

Besides the Gödel--Gentzen translation, there are also other embeddings of
classical logic into the negative fragment of intuitionistic logic. These
translations are all similar, but they nevertheless differ in certain
respects. For our purposes the Gödel--Gentzen translation is
sufficient.
However, discussion of further possible translations, also in the context of the
\(A\)-translation, is for instance given in \cite{gaspar2012negative}.
\begin{lemma}\label{lem:GGequioverNA}
Let $A\in\form_{\NAO}$, then $A$ is equivalent to $A^\GG$ over $\NAO$, i.e. 

$$\NAO\vdash A \leftrightarrow A^\GG$$
\end{lemma}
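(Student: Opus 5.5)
We argue by induction on the structure of $A\in\form_{\NAO}$, proving both directions of $A\leftrightarrow A^\GG$ simultaneously. Note that the only clause of the Gödel--Gentzen translation that changes an $\NAO$-formula is the atomic one: since $\NAO$ has no $\vee$, $\exists$ or $\bot$, the translation of $\to$, $\wedge$ and $\forall$ is homomorphic. So the whole argument reduces to the atomic case, and the induction then propagates the equivalence through the connectives.

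\emph{Atomic case.} Let $A=\atom t$. If $A=\FF$ syntactically, then $A^\GG=\FF$ and there is nothing to show. Otherwise $A^\GG=\neg\neg\atom t$. The direction $\atom t\to\neg\neg\atom t$ is pure minimal logic: $\lambda_u\lambda_v (vu)$. For the converse $\neg\neg\atom t\to\atom t$ we use the case-distinction axiom $\mathcal{C}^{b,B}$ with $B(b):=\neg\neg\atom b\to\atom b$, instantiated at $t$. For $b=\true$ we need $\neg\neg\TT\to\TT$, which follows from $\mathsf{Truth}$. For $b=\false$ we need $\neg\neg\FF\to\FF$, obtained by applying the assumption to $\lambda_{w}w:\FF\to\FF$. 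Hence $\NAO\vdash\neg\neg\atom t\to\atom t$. This is the only step where non-logical axioms of $\NAO$ are needed, and I expect it to be the main point of care: the stability of atoms must come from boolean case distinction on the term $t$, rather than from some decidability principle that is not available.

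\emph{Induction steps.} For $A=B\to C$, the hypotheses give $B\leftrightarrow B^\GG$ and $C\leftrightarrow C^\GG$. From $B\to C$ and $B^\GG$ we pass to $B$, then $C$, then $C^\GG$; the converse direction is symmetric. For $A=B\wedge C$ we apply the hypotheses componentwise, using the projections $\pi_0,\pi_1$ and pairing. For $A=\forall_xB$ we instantiate with $x$, apply the hypothesis (which is proved without free assumptions, so $x$ is not free in any open assumption), and generalize again. All these steps are routine natural-deduction manipulations, and completing them finishes the induction.
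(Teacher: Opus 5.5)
Your proposal is correct and follows essentially the same route as the paper's proof. Both argue by induction on $A$. In the atomic case both prove stability of atoms with the axiom $\mathcal{C}^{b,B}$ for $B(b):=\neg\neg\atom b\to\atom b$, using $\mathsf{Truth}$ for $\true$ and applying the hypothesis to the identity on $\FF$ for $\false$; the $\to$, $\wedge$ and $\forall$ cases then follow directly from the induction hypothesis.
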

\begin{proof}
The proof is by induction on $A$:

For $A=\FF$ there is nothing to show, as $A^\GG = A$ in this case. 

For $A=P$ for atomic $P$, we have $A=\atom{t}$ for some boolean term $t$ and $A^\GG= \neg\neg\atom t$. As $C\to \neg\neg C$ is derivable for any formula $C$, we define $$B(b) := \neg\neg\atom b\to \atom b$$ and our goal is $B(t)$. Therefore, we apply this formula to the axiom $\mathcal{C}^{b,A}$, which leads to $\forall_b(B(\true) \to B(\false) \to B(b))
$. Hence, it suffices to show $B(\true)$ and $B(\false)$. The conclusion of $B(\true)$ is $\TT$ and follows therefore from the axiom $\mathsf{Truth}$. Furthermore, $\lambda_v(v\lambda_uu)$ with assumption variables $u:\FF$ and $v:(\FF\to \FF)\to \FF$ is a derivation term for $B(\false)$.

For $A=B \circ C$ with $\circ\in\{\wedge, \to \}$, the claim follows form the induction hypothesis, i.e. $\NAO\vdash B\leftrightarrow B^\GG$ and $\NAO \vdash C\leftrightarrow C^\GG$.

For $A= \forall_x B$, we have a derivation of $B\leftrightarrow B^\GG$ in $\NAO$ by the induction hypothesis, which does not contain any free assumptions, and therefore also no free assumption about $x$. By universally generalizing both directions of the induction hypothesis, we obtain  $\forall_x(B\to B^\GG)$ and $\forall_x(B^\GG\to B)$. This leads to $\forall_x B \to \forall_x B^\GG$ and $\forall_x B^\GG \to \forall_x B$, which is exactly $\forall_x B \leftrightarrow (\forall_x B)^\GG$.

As $A\in \form_{\NAO}$ does not contain any disjunction or existential quantifier, the proof is finished.
\end{proof}
\begin{corollary}\label{coro:DerivHAToDerivNA}
Let $A\in\form_{\NAO}$ with $\HAO\vdash A$, then $\NAO\vdash A$.
\end{corollary}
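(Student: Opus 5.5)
The plan is to pass through classical arithmetic and the Gödel--Gentzen translation, using Theorem~\ref{thm:goedelgentzen} together with Lemma~\ref{lem:GGequioverNA}. The chain of implications is
\[
\HAO\vdash A \;\Longrightarrow\; \PAO\vdash A \;\Longrightarrow\; \NAO\vdash A^\GG \;\Longrightarrow\; \NAO\vdash A.
\]

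First, since $\PAO$ is $\HAO$ with the additional axiom $\mathsf{lem}$, every proof term in $\HAO$ is also a proof term in $\PAO$. Hence $\HAO\vdash A$ gives $\PAO\vdash A$.

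Next, $A\in\form_{\NAO}\subseteq\form$ is a statement of Peano arithmetic. The direction ``$\Rightarrow$'' of Theorem~\ref{thm:goedelgentzen} therefore yields $\NAO\vdash A^\GG$.

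Finally, because $A\in\form_{\NAO}$, Lemma~\ref{lem:GGequioverNA} provides a closed derivation of $A^\GG\to A$ in $\NAO$. Applying this derivation to the proof of $A^\GG$ by $\to$-elimination gives a closed proof term of $A$ in $\NAO$, so $\NAO\vdash A$.

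The only substantive step is the one relying on Theorem~\ref{thm:goedelgentzen}, which is cited from the literature. It must hold in the present formulation, where $\NAO$ contains $\wedge$ and $(A\wedge B)^\GG := A^\GG\wedge B^\GG$. The standard proof goes by induction on $\PAO$-derivations and shows that the translation of each axiom, in particular $\mathsf{lem}$ and the $\vee,\exists$ axioms, is derivable in $\NAO$. For this it uses that every translated formula is stable, i.e.\ $\neg\neg A^\GG\to A^\GG$. Stability for conjunctions follows componentwise, and for atoms it follows by the case analysis in the proof of Lemma~\ref{lem:GGequioverNA}, so the conjunction clause causes no difficulty. I would simply invoke the theorem as stated.
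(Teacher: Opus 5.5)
Your proof is correct and follows essentially the same route as the paper. You embed the $\HAO$-derivation into $\PAO$, apply Theorem~\ref{thm:goedelgentzen} to obtain $\NAO\vdash A^\GG$, and conclude with Lemma~\ref{lem:GGequioverNA}. (One small inaccuracy in your closing aside: stability of a translated atom $\neg\neg P$ is immediate and does not need the case analysis from Lemma~\ref{lem:GGequioverNA}, but this does not affect the argument.)
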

\begin{proof}
A derivation of $A$ in $\HAO$ is also a derivation in $\PAO$, therefore $\PAO\vdash A$. By Theorem~\ref{thm:goedelgentzen}, we get $\NAO\vdash A^\GG$, and by Lemma \ref{lem:GGequioverNA} it follows $\NAO \vdash A$.
\end{proof}
\section{Formula Classes, Proof-Theoretic Properties, and Undecidability}
In this section, we discuss the formula classes needed for the refined
$A$-translation. These classes are defined purely by recursion on the
syntactic structure of formulas. Each of them satisfies a certain derivability
property. Thus, in the refined $A$-translation, derivability properties are
linked to the syntactic structure of formulas; this is precisely one of the
distinguishing features of the refined $A$-translation. As we shall see,
however, this connection cannot be made complete. In this sense, the link
between the syntactic structure of a formula and the derivability of a
corresponding property has its limitation.
\subsection{Case Distinction}
Since there is no strong disjunction in $\NAO$, but in some situations we
nevertheless need to do a case distinction on a formula, we say that a
formula $A$ \emph{admits case distinction} in $\XAO\in\{\NAO,\MAO,\HAO, \PAO\}$ if
$$\XAO \vdash (A\to S)\to(\neg A \to S) \to S$$
is derivable for every formula $S\in \form_{\XAO}$.
If the statement above is required only for the special case $S=\FF$, rather
than for all $S$, we obtain the weak disjunction $A\tilde{\vee}\neg A$, which is derivable for all formulas $A$.

If the strong disjunction is available, as in $\HAO$ or $\PAO$, then, by the
axiom $\vee^-$, the statement $\HAO \vdash A\vee\neg A$ is equivalent to $A$ admitting case distinction. The axiom $\mathsf{lem}$ states exactly that every formula admits case distinction in $\PAO$.

In the following, we describe the class $\CQ$ of formulas in $\NAO$ and show that all formulas in $\CQ$ admit case distinction.
\begin{definition}
With $\CP$ we denote the set of all prime formulas except $\bot$, i.e., $$\CP:=\{\atom{t}\mid t\in \term,\ t:\BB\}.$$
Furthermore, we define the set $\mathcal{Q}$ by the following rules:
\begin{align*}
\CP\subseteq \CQ,&\quad  \bot\notin \CQ,\\
A\to B,\ A\wedge B \in \CQ& \quad \text{iff}\quad A,B\in \CQ,\\
\forall_xA(x)\in \CQ& \quad \text{iff} \quad x:\BB \text{ and } A(\true),A(\false)\in \CQ.
\end{align*}
\end{definition}
Note that $\bot \notin\CQ$, and therefore formulas in $\CQ$ are formulas in $\NAO$.

\begin{lemma}
\label{lem:Qprop}
Let $\XAO\in\{\NAO,\MAO,\HAO\}$. For every $A\in\CQ$ and every
formula $S$ in $\XAO$ we have
$$
\XAO \vdash (A\to S)\to(\neg A \to S) \to S.
$$
\end{lemma}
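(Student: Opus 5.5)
We argue by induction on the construction of $A\in\CQ$, with $S$ universally quantified in the induction hypothesis. It is enough to work in $\NAO$ when $S\in\form_{\NAO}$, and in $\MAO$ or $\HAO$ for arbitrary $S$ of that theory. All constructions below use only the axioms common to the three theories, namely $\mathsf{Truth}$, $\mathcal{C}^{b,\cdot}$ and the rules for $\to,\wedge,\forall$, together with Lemma~\ref{lem:efq}. Hence a single uniform argument covers all three cases.

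\emph{Base case} $A=\atom t$. Put $B(b):=(\atom b\to S)\to(\neg\atom b\to S)\to S$ and apply $\mathcal{C}^{b,B(b)}$ to $t$, so it suffices to prove $B(\true)$ and $B(\false)$. For $B(\true)$ we feed $\mathsf{Truth}:\TT$ into the first hypothesis. For $B(\false)$ we feed $\lambda_u u:\FF\to\FF$, that is $\neg\FF$, into the second hypothesis.

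\emph{Conjunction.} Let $A=B\wedge C$ and assume $u:A\to S$ and $v:\neg A\to S$. Apply the induction hypothesis for $B$ with the same $S$.
\begin{itemize}
\item If $B$ holds, apply the induction hypothesis for $C$. If $C$ also holds, use $u$ on $(B,C)$. If $\neg C$ holds, use $v$ on $\lambda_w (\neg C)(\pi_1 w)$.
\item If $\neg B$ holds, use $v$ on $\lambda_w(\neg B)(\pi_0 w)$.
\end{itemize}

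\emph{Implication.} Let $A=B\to C$ and split first on $B$.
\begin{itemize}
\item If $\neg B$ holds, then $B\to C$ follows from ex-falso $\FF\to C$ (Lemma~\ref{lem:efq}), and we use $u$.
\item If $B$ holds, split on $C$. If $C$ holds, then $\lambda_{w}C$ proves $B\to C$ and we use $u$. If $\neg C$ holds, then $\lambda_w (\neg C)(wB)$ proves $\neg(B\to C)$ and we use $v$.
\end{itemize}
This is where Lemma~\ref{lem:efq} is needed, and it is available in all three theories.

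\emph{Quantifier.} Let $A=\forall_x A(x)$ with $x:\BB$, where $A(\true),A(\false)\in\CQ$; this case is the main obstacle. Since the induction runs over the defining clauses of $\CQ$, the induction hypothesis applies to $A(\true)$ and $A(\false)$, not to $A(x)$ itself. Split on $A(\true)$ and then on $A(\false)$.
\begin{itemize}
\item If either instance fails, say $\neg A(\true)$, then $\lambda_w \neg A(\true)(w\true)$ proves $\neg\forall_xA(x)$, and we use $v$.
\item If both instances hold, we must derive $\forall_x A(x)$. Fix $x$ and apply $\mathcal{C}^{b,A(b)}$ to $x$ together with the two instances. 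This gives $A(x)$, and generalizing over $x$ is legitimate because $x$ is not free in the assumptions. Then we use $u$.
\end{itemize}

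Some care is needed with the bound variable. The induction should be formally on the derivation of $A\in\CQ$ (equivalently, on a measure such as the number of logical symbols, since $A(\true)$ is smaller than $\forall_xA(x)$), and $x$ should be renamed so that it does not occur free in $S$. With these precautions every case is a direct proof-term construction.
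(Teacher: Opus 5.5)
Your proof is correct and is essentially the paper's argument: induction on $A\in\CQ$, nested case splits supplied by the induction hypotheses, ex-falso (Lemma~\ref{lem:efq}) in the implication case, and the axiom $\mathcal{C}$ for the Boolean quantifier. The differences are cosmetic. You split on $B$ before $C$ and reuse the same $S$ under open hypotheses, whereas the paper instantiates the induction hypothesis at $C\to S$ or $\neg C\to S$. You also work out the quantifier case directly, including the renaming of $x$ away from $S$, instead of reducing it to the conjunction case via $\forall_b B(b)\leftrightarrow B(\true)\wedge B(\false)$.
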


\begin{proof}
We argue by induction on $A$ for all formulas $S$.

First, let $A=\atom{t}$ and $S$ be given and define
$$B(b):=(\atom{b} \to S)\to (\neg\atom{b}\to S) \to S.$$
Our goal is to derive $B(t)$.
By the axiom $\mathcal{C}^{b,B(b)}$ we have  
$$\forall_b(B(\true)\to B(\false)\to B(b)).$$
Thus, it is enough to derive $B(\true)$ and $B(\false)$.
These formulas are
\[
B(\true)= (\atom{\true} \to S)\to(\neg\atom{\true}\to S)\to S
\]
and
\[
B(\false)= (\atom{\false} \to S)\to(\neg\atom{\false}\to S)\to S.
\]
In both cases we have to show $S$ from the corresponding assumptions. In
the first case, $S$ follows from the first premise together with the axiom
$\mathsf{Truth}$. In the second case, $S$ follows from the second assumption,
since $\neg\atom{\false} = \atom{\false}\to \atom{\false}$ is a tautology.

Now let $A=B\to C$ and $S$ be given. We apply the induction hypothesis for $B$ to $\neg C\to S$ and the induction hypothesis for $C$ to $S$. This gives
derivations of
\begin{align}
(B\to \neg C \to S) \to (\neg B\to \neg C \to S) &\to \neg C \to S,\label{form:0}\\
(C\to S)\to (\neg C \to S) &\to S. \label{form:1}
\end{align}
Our goal is $$((B\to C) \to S) \to (\neg(B\to C)\to S) \to S.$$
Therefore, we assume 
$$
(B\to C)\to S
\qquad\text{and}\qquad
\neg(B\to C)\to S
$$
and our goal is $S$. By \eqref{form:1}, it is enough to derive $C\to S$ and
$\neg C\to S$. The formula $C\to S$ follows directly from $(B\to C)\to S$, since $C$ implies $B\to C$. It remains to derive $\neg C\to S$. By \eqref{form:0}, it is enough
to derive
$$
B\to \neg C\to S
\qquad\text{and}\qquad
\neg B\to \neg C\to S .
$$
The first formula follows from $\neg(B\to C)\to S$. For the second formula, assume $\neg B$ and $\neg C$.
We have to derive $S$. Since $(B\to C)\to S$ is already available, it is
enough to derive $B\to C$. Thus assume $B$. Together with $\neg B$, this gives
$C$ by Lemma~\ref{lem:efq}.

Next, let $A=B\wedge C$ and $S$ be given. We apply the induction hypothesis for $B$ to $C\to S$ and the induction hypothesis for $C$ to $S$. This gives
derivations of
\begin{align}
(B\to C\to S)\to (\neg B \to C \to S) &\to C \to S\label{form:2}\\
(C\to S)\to (\neg C \to S) &\to S\label{form:3}
\end{align}
Our goal is
$$
(B\wedge C \to S) \to (\neg(B\wedge C) \to S) \to S .
$$
Assume therefore
$$
B\wedge C \to S
\qquad\text{and}\qquad
\neg(B\wedge C)\to S .
$$ and our goal is $S$.
By \eqref{form:3} it suffices to show  $C\to S$ and $\neg C \to S$. The second formula follows directly by $\neg(B\wedge C) \to S$ ,
because $\neg C$ implies $\neg(B\wedge C)$.
It remains to derive $C\to S$.
Using \eqref{form:2}, it is enough to derive
$$
B\to C\to S
\qquad\text{and}\qquad
\neg B\to C\to S .
$$
The first formula follows from $B\wedge C\to S$. The second formula follows
from $\neg(B\wedge C)\to S$, since $\neg B$ implies $\neg(B\wedge C)$.

Finally, let $A=\forall_b B(b)$, where $b$ has type $\BB$. By the axiom
$\mathcal{C}^{b,B(b)}$, the formula $\forall_b B(b)$ is equivalent to
$B(\true)\wedge B(\false)$. Hence this case is treated in the same way as the
case for conjunction.
\end{proof}

With the class $\CQ$ and the lemma above, we have identified a class of
formulas whose elements satisfy a certain derivability property. In what follows, we shall only need this property.
Originally, $\CQ$ was intended to be the class of all quantifier-free formulas
not containing $\bot$. By allowing quantification over Boolean objects, we have
slightly extended this class here. One could also extend it further, for example by adding any derivable formula.

As in the case of the originally formulated open problem, the question arises
whether the class of all formulas satisfying the property stated in the lemma
above can be characterized in a useful way. As the following proposition shows,
however, we already reach a limitation at this point:

\begin{proposition}\label{prop:casesUndecidable}
Assuming that $\PAO$ is  consistent, there is no recursive algorithm which decides whether a given closed formula $A\in\operatorname{Form}_{\NAO}$ has the property $$\MAO \vdash (A\to \bot)\to(\neg A \to \bot) \to \bot.$$
\end{proposition}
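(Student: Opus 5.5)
The plan is to reduce the property in Proposition~\ref{prop:casesUndecidable} to the question of whether $A$ is \emph{decided} by $\PAO$. Then I would invoke the essential undecidability of arithmetic.

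\textbf{Step 1: a characterization.} For a closed $A\in\form_{\NAO}$ I would show
$$\MAO \vdash (A\to \bot)\to(\neg A \to \bot) \to \bot \quad\Longleftrightarrow\quad \NAO\vdash A \ \text{ or }\ \NAO\vdash \neg A.$$

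The direction ``$\Leftarrow$'' is immediate. If $\NAO\vdash A$, then from $u:A\to\bot$ we obtain $\bot$, since $\NAO$-proofs are $\MAO$-proofs. If $\NAO\vdash\neg A$, we use the second premise in the same way.

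For ``$\Rightarrow$'', I would apply Lemma~\ref{lem:substbot} with $\XAO=\HAO$ and $S:=A\vee\neg A$. Since $A$ contains no $\bot$, we have $A^S=A$, and we obtain
$$\HAO\vdash (A\to A\vee\neg A)\to(\neg A\to A\vee\neg A)\to A\vee\neg A.$$
Both premises are provable by $\vee^+_0$ and $\vee^+_1$, so $\HAO\vdash A\vee\neg A$. By the disjunction property of $\HAO$, which I would cite from the literature (e.g.\ via realizability or normalization, as in Troelstra), we get $\HAO\vdash A$ or $\HAO\vdash\neg A$. Corollary~\ref{coro:DerivHAToDerivNA} then moves this to $\NAO$. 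I expect the disjunction property to be the main point requiring care, since it is not proved in the paper and must hold for the finite-type system exactly as formulated here.

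\textbf{Step 2: link to $\PAO$.} For $B\in\form_{\NAO}$, I claim $\NAO\vdash B$ iff $\PAO\vdash B$. One direction is trivial. For the other, Theorem~\ref{thm:goedelgentzen} gives $\NAO\vdash B^\GG$, and Lemma~\ref{lem:GGequioverNA} gives $\NAO\vdash B$. Hence the property holds exactly for those closed $A\in\form_{\NAO}$ with $\PAO\vdash A$ or $\PAO\vdash\neg A$.

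\textbf{Step 3: undecidability.} Suppose some algorithm decides this set $\mathcal{D}$. Then the set of closed $\NAO$-theorems of $\PAO$ would be decidable, by the following procedure. Given $A$, first check whether $A\in\mathcal{D}$; if not, reject. If so, enumerate all $\PAO$-proofs until a proof of $A$ or of $\neg A$ appears. By consistency of $\PAO$, only one of the two can occur, so the answer is correct.

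Every closed $\PAO$-sentence $C$ has $C^\GG\in\form_{\NAO}$ with $\PAO\vdash C\leftrightarrow C^\GG$ (by Theorem~\ref{thm:goedelgentzen}, since the translation is classically equivalent). Hence the full theorem set of $\PAO$ would be decidable. This contradicts Church's theorem: a consistent theory interpreting Robinson arithmetic (first-order arithmetic sits inside $\PAO$ via type $\NN$ and primitive recursive terms $\rec_\NN$) is undecidable. Consistency of $\PAO$ is used both here and in the search step.
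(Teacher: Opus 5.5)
Your proof is correct and follows the paper's argument. The characterization of the property as "$\NAO\vdash A$ or $\NAO\vdash\neg A$" is obtained exactly as in the paper, via Lemma~\ref{lem:substbot} with $S:=A\vee\neg A$, the rules $\vee^+_{0/1}$, the disjunction property of $\HAO$, and Corollary~\ref{coro:DerivHAToDerivNA}. The only difference is the last step: you reduce theoremhood in $\PAO$ to the property (using an explicit proof search and the G\"odel--Gentzen translation) and then invoke Church's theorem, whereas the paper reduces the halting problem through a sentence $\tilde A$ with $\PAO\vdash\tilde A$ iff $T$ terminates. Your variant uses only the stated consistency hypothesis, relies on the same undecidability fact the paper uses in Proposition~\ref{prop:SetsUndec}, and makes explicit the proof search that the paper leaves implicit.
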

\begin{proof}
Applying Lemma~\ref{lem:substbot} to $A\vee\neg A\in\form$ and the rules $\vee^+_{0/1}$, $\MAO \vdash (A\to \bot)\to(\neg A \to \bot) \to \bot$ leads to $\HAO\vdash A\vee\neg A$. In the other direction if $\HAO\vdash A\vee\neg A$, then by the \emph{disjunction property} for closed formulas (see for example \cite[Corollary 5.24]{kohlenbach2008applied} or \cite[Section 1.11.2]{troelstra1973metamathematical}) we get $\HAO\vdash A$ or $\HAO \vdash \neg A$ and by Corollary~\ref{coro:DerivHAToDerivNA} either $\NAO\vdash A$ or $\NAO\vdash \neg A$. In both cases we get $\MAO \vdash (A\to \bot)\to(\neg A \to \bot) \to \bot$. Therefore, a characterization of all closed $A$ with $\MAO \vdash (A\to \bot)\to(\neg A \to \bot) \to \bot$ is equivalent to a characterization of all closed $A$ with $\HAO\vdash A\vee\neg A$. We show that this is not possible.

For this, let a Turing machine $T$ (with arbitrary but fixed input) be given. By \cite[Chapter 42]{smith2007introduction}, there is some $\tilde{A}\in\form$ with $$\PAO\vdash \tilde{A}\quad \Leftrightarrow\quad T \text{ terminates}.$$
By Theorem \ref{thm:goedelgentzen} we can even assume that there is some $A\in\form_{\NAO}$ with
$$\NAO\vdash A\quad \Leftrightarrow\quad T \text{ terminates}.$$
If we had a recursive algorithm which decides $\HAO\vdash A\vee\neg A$ for any given $A\in \form_{\NAO}$, we 
could decide, 
$$ \HAO\vdash A\vee \neg A \qquad \text{or}\qquad \HAO \not\vdash  A\vee \neg A.$$
If the left-hand side holds we get either $\HAO\vdash A$ or $\HAO\vdash\neg A$ by the disjunction property. By our assumption $\PAO$ and thereby $\HAO$ is consistent, and therefore we get that $T$ terminates or not in this case. If the right-hand case holds, then $\HAO\not\vdash A$ follows, as $\HAO\vdash A$ implies $\HAO\vdash A\vee \neg A$ by $\vee^+_0$. Hence an algorithm to decide $\HAO\vdash A\vee\neg A$  would lead to an algorithm which decides whether any given Turing machine terminates or not, which is not possible \cite[Theorem 43.4]{smith2007introduction}.
\end{proof}

\subsection{Definite, Goal, Relevant and Irrelevant Formulas}
We now give the definitions of definite ($\CD$), goal ($\CG$), relevant ($\CR$), and irrelevant formulas ($\CI$). To show that the definition of these formulas is indeed well-founded and terminates without entering an infinite loop, we give it by recursion on the structure of formulas. Thus, it is somewhat more complex than the original definition in  \cite{berger2002refined,schwichtenberg2012proofs,seisenberger2003constructive,trifonov2012analysis}.
\begin{definition}
\label{defi:DGRI}
Let $\CQ_\FF:= \{ A\in\operatorname{Form}_{\MAO} \mid A^\FF \in \CQ \}$, we define the sets $\CD, \CG, \CR, \CI \subseteq \operatorname{Form}_{\MAO}$ by induction on the formula structure as follows:
\begin{align*}
\bot\in \CD\cap \CR\cap  \CG,\qquad \bot \notin \CI
\end{align*}
\begin{align*}
\CP \subseteq \CD\cap  \CI\cap  \CG,\qquad \CP\cap \CR =\{\TT \}
\end{align*}
\begin{align*}
A\to B \in 
\begin{cases} 
\CD &\text{ iff }\quad A\in \CI \text{ and } B\in \CD \quad \text{or}\quad A\in\CG \text{ and } B\in\CR\\
\CG &\text{ iff }\quad A\in \CR\cup\CD\cap {\CQ_\FF} \text{ and } B\in \CG\quad \text{or}\quad A\in\CD \text{ and } B\in\CI \\
\CR &\text{ iff }\quad A\in \CG \text{ and } B\in \CR \\
\CI &\text{ iff }\quad A\in \CD \text{ and } B\in \CI \\
\end{cases}
\end{align*}
\begin{align*}
A\wedge B \in \mathcal{S}\quad \text{iff}\quad A,B\in \mathcal{S}\quad \text{ with } \mathcal{S}\in\{\CD, \CG, \CR, \CI \},
\end{align*}
\begin{align*}
\forall_xA(x) \in \begin{cases} 
\CD&\text{ iff }\quad A\in \CD\cup \CR\\
\CG&\text{ iff }\quad A\in \CI\quad \text{or}\quad  x:\BB \ \text{and}\ A(\true),A(\false)\in \CG\\
\CR&\text{ iff }\quad A\in \CR\\
\CI&\text{ iff }\quad A\in \CI\\
\end{cases}
\end{align*}
Note, that $\CR\subseteq \CD$ and $\CI\subseteq \CG$.
\end{definition}
\begin{lemma}\label{lem:set_props}
The following formulas are derivable in $\MAO$:
\begin{align}\label{form:prop1}
D^\FF &\to D,\\\label{form:prop2}
G &\to (G^\FF \to \bot) \to \bot,\\\label{form:prop3}
(\neg R^\FF\to \bot) &\to R,\\\label{form:prop4}
I&\to I^\FF,
\end{align}
where $D\in\CD$, $G\in \CG$, $R\in \CR$ and $I\in \CI$.
\end{lemma}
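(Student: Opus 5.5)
We prove all four statements simultaneously by induction on the number of logical symbols in the formula. This measure is needed rather than plain structural induction, because the $\forall$-clause for $\CG$ refers to the instances $A(\true),A(\false)$, which have the same size as $A$ but are not subformulas of $\forall_x A$. In each case we use the clause of Definition~\ref{defi:DGRI} by which the formula was put into the respective class. Three facts are used throughout: $(A\circ B)^\FF=A^\FF\circ B^\FF$; $\FF\to C$ holds for every $C$ by Lemma~\ref{lem:efq}; and $\FF\to\bot$ is the axiom $\bot^+$, so from $\FF$ we may always conclude $\bot$. We also use the remark $\CR\subseteq\CD$ (proved by a separate routine induction), so a hypothesis $A\in\CD\cup\CR$ gives $A\in\CD$.

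\textbf{Base cases.} For $P\in\CP$ we have $P^\FF=P$, so \eqref{form:prop1} and \eqref{form:prop4} are trivial. Formula \eqref{form:prop2} reads $P\to(P\to\bot)\to\bot$, and \eqref{form:prop3} for $\TT$ follows from $\mathsf{Truth}$. For $\bot$ we have $\bot^\FF=\FF$. Then \eqref{form:prop1} is $\bot^+$ and \eqref{form:prop2} is trivial. Formula \eqref{form:prop3} becomes $(\neg\FF\to\bot)\to\bot$, which holds because $\neg\FF=\FF\to\FF$ is derivable.

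\textbf{Implication.} Here we argue clause by clause; each clause reduces to chaining the induction hypotheses.
For $A\to B\in\CD$:
\begin{itemize}
\item If $A\in\CI$ and $B\in\CD$, we go $A\to A^\FF\to B^\FF\to B$.
\item If $A\in\CG$ and $B\in\CR$, we assume $f:A^\FF\to B^\FF$ and $A$. By \eqref{form:prop3} for $B$ it suffices to refute $\neg B^\FF$ into $\bot$. Given $\neg B^\FF$, we apply \eqref{form:prop2} for $A$ to the map $A^\FF\to\bot$ obtained from $f$, $\neg B^\FF$ and $\bot^+$.
\end{itemize}
For $A\to B\in\CG$, we assume $A\to B$ and $h:(A^\FF\to B^\FF)\to\bot$.
\begin{itemize}
\item If $A\in\CR$, we obtain $A$ from \eqref{form:prop3}: a hypothesis $\neg A^\FF$ yields $A^\FF\to B^\FF$ by ex falso, and then $h$ gives $\bot$. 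Hence $B$ holds, and \eqref{form:prop2} for $B$ needs $B^\FF\to\bot$. This follows from $h$, since $B^\FF$ gives $A^\FF\to B^\FF$.
\item If $A\in\CD\cap\CQ_\FF$, we replace the use of \eqref{form:prop3} by Lemma~\ref{lem:Qprop} applied to $A^\FF\in\CQ$ with $S=\bot$. In the branch $A^\FF$ we get $A$ by \eqref{form:prop1} and proceed as before. In the branch $\neg A^\FF$ we use ex falso and $h$.
\item If $A\in\CD$ and $B\in\CI$, we feed $h$ the chain $A^\FF\to A\to B\to B^\FF$.
\end{itemize}
For $A\to B\in\CR$, we assume $k:\neg(A^\FF\to B^\FF)\to\bot$ and $A$, and aim at \eqref{form:prop3} for $B$. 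So we assume $\neg B^\FF$. By \eqref{form:prop2} for $A$ it suffices to give $A^\FF\to\bot$: from $a:A^\FF$ we apply $k$ to $\lambda_f\,(\neg B^\FF)(fa)$.
For $A\to B\in\CI$, we use the chain $A^\FF\to A\to B\to B^\FF$.

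\textbf{Conjunction and universal quantifier.} These cases are componentwise. For \eqref{form:prop2} on $A\wedge B$ we nest the two induction hypotheses. To get $A^\FF\to\bot$, we use \eqref{form:prop2} for $B$ with $B^\FF\to\bot$ derived from the hypothesis $(A^\FF\wedge B^\FF)\to\bot$. For \eqref{form:prop3}, $\neg A^\FF$ implies $\neg(A^\FF\wedge B^\FF)$. For $\forall$, the cases $\CD$, $\CR$ and $\CI$ work pointwise under $\lambda_x$. For $\CR$ we use that $\neg A^\FF(x)$ implies $\neg\forall_xA^\FF$. The case $\forall_xA\in\CG$ with $A\in\CI$ is trivial: $\forall_xA$ and \eqref{form:prop4} give $\forall_xA^\FF$, and then the hypothesis gives $\bot$. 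The Boolean case of $\CG$ is the only delicate one. Using $\mathcal{C}^{b,A}$, we replace $\forall_bA$ and $\forall_bA^\FF$ by the conjunctions $A(\true)\wedge A(\false)$ and $A^\FF(\true)\wedge A^\FF(\false)$, noting $A(\true)^\FF=A^\FF(\true)$. We then repeat the conjunction argument. This is exactly why we induct on size rather than on subformulas.

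I expect the main bookkeeping obstacle to be the $\CG$ cases for implication, especially the $\CQ_\FF$ subcase, where Lemma~\ref{lem:Qprop} is needed in $\MAO$ with $S=\bot$. The rest is routine chaining of the induction hypotheses.
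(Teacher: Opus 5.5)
Your proposal is correct and follows the paper's proof essentially case for case. Like the paper, you use simultaneous induction, chain the induction hypotheses clause by clause, apply Lemma~\ref{lem:Qprop} with $S=\bot$ in the $\CD\cap\CQ_\FF$ subcase, and reduce the Boolean $\forall$-case of $\CG$ to the conjunction case via $\mathcal{C}^{b,A}$. The only deviations are minor: you make the induction measure (number of logical symbols) explicit so that the instances $A(\true),A(\false)$ are covered, which the paper's ``induction on the formula structure'' leaves implicit, and you treat $\forall_xA\in\CD$ with $A\in\CR$ via $\CR\subseteq\CD$ rather than by the paper's direct argument from \eqref{form:prop3}.
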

A proof of this statement was already given in \cite{schwichtenberg2012proofs}.
However, we have slightly extended the theory, so that additional cases
arise. Moreover, the structure of the proof is very useful for our
considerations on extending the classes of formulas. We therefore include
the full proof here as well.
\begin{proof}
The proof is by simultaneous induction on the formula structure:\bigskip

\underline{Case $\bot$:}
We have $\FF\to \bot$ by $\bot^+$, which proves \eqref{form:prop1} in this case.
Furthermore, $\bot\to(\FF\to\bot) \to \bot$ and $((\FF \to\FF)\to\ \bot) \to \bot$ are trivial, hence \eqref{form:prop2} and \eqref{form:prop3}. Note that for \eqref{form:prop4} there is nothing to show as $\bot\notin \CI$.
\medskip

\underline{Case $\atom t$:}
Note that $\atom{t}^\FF = \atom{t}$. Therefore \eqref{form:prop1}, \eqref{form:prop2} and \eqref{form:prop4} are obvious. By the definition of $\CR$, we only have to show \eqref{form:prop3} for $R=\TT$, which follows directly by the axiom~$\mathsf{Truth}$.
\bigskip

\underline{Case $A\to B$:}
\medskip

\underline{Subcase 1.1}: Let $A\to B\in\CD$ with $A\in \CI$ and $B\in \CD$. By induction hypothesis we have derivations of $A\to A^\FF$ and $B^\FF\to B$ and our goal is $$(A^\FF \to B^\FF)\to A \to B.$$ Hence, assume $A^\FF \to B^\FF$ and $A$, to prove $B$. From $A$ and $A\to A^\FF$ we get $A^\FF$ and with $A^\FF \to B^\FF$ we get $B^\FF$. Therefore, $B^\FF\to B$ gives the goal $B$.

\underline{Subcase 1.2}: Let $A\to B\in\CD$ with $A\in \CG$ and $B\in \CR$.

By the induction hypothesis, we have derivations of $A\to (A^\FF \to \bot) \to \bot$ and $(\neg B^\FF\to \bot)\to B$. We have to show $(A^\FF \to B^\FF)\to A \to B$. Thus, assume $A^\FF \to B^\FF$ and $A$. Our goal is to prove $B$. Using the derivation of $(\neg B^\FF\to \bot)\to B$, it is enough to prove $\neg B^\FF\to \bot$. So let us assume $\neg B^\FF$. We have to derive $\bot$.

Since $A$ and $A\to (A^\FF \to \bot) \to \bot$ are already given, it remains to prove $A^\FF \to \bot$. For this, assume $A^\FF$. Then, from $A^\FF \to B^\FF$, we obtain $B^\FF$. Together with the assumption $\neg B^\FF$, this gives $\FF$. Hence, by the axiom $\bot^+$, we obtain $\bot$, as required.
\medskip

\underline{Subcase 2.1}: Let $A\to B\in\CG$ with $A\in \CR$ and $B\in \CG$.
By the induction hypothesis, we have derivations of $(\neg A^\FF \to \bot) \to A$ and $B\to (B^\FF\to \bot)\to \bot$. We have to show
\[
(A\to B)\to ((A^\FF\to B^\FF)\to \bot)\to\bot.
\]
Thus, assume $A\to B$ and $(A^\FF\to B^\FF)\to \bot$. Our goal is $\bot$. By the derivation of $B\to (B^\FF\to \bot)\to \bot$, it is enough to prove both $B$ and $B^\FF\to \bot$. The second formula follows directly from the assumption $(A^\FF\to B^\FF)\to \bot$, since $B^\FF$ yields $A^\FF\to B^\FF$.

It remains to prove $B$. Using $A\to B$, it is enough to prove $A$. By the derivation of $(\neg A^\FF \to \bot) \to A$, it suffices to show $\neg A^\FF \to \bot$. So assume $\neg A^\FF$, i.e., $A^\FF\to \FF$. By Lemma~\ref{lem:efq}, this gives $A^\FF\to B^\FF$. Together with the assumption $(A^\FF\to B^\FF)\to \bot$, this yields $\bot$, as required.
\smallskip

\underline{Subcase 2.2}: Let $A\to B\in\CG$ with $A\in \CD \cap \CQ_\FF$ and $B\in \CG$. By the induction hypothesis, we have derivations of $A^\FF\to A$ and $B\to (B^\FF\to \bot)\to \bot$.  Again, we assume $A\to B$ and $(A^\FF\to B^\FF)\to \bot$, and our goal is $\bot$. By Lemma~\ref{lem:Qprop}, we have a derivation of $(A^\FF\to \bot)\to(\neg A^\FF\to \bot) \to \bot$. Hence, it is enough to prove both $A^\FF\to \bot$ and $\neg A^\FF\to \bot$. The second formula follows as at the end of the previous case: by Lemma~\ref{lem:efq}, the assumption $\neg A^\FF$ gives $A^\FF\to B^\FF$, and together with $(A^\FF\to B^\FF)\to \bot$ this yields $\bot$.

It remains to prove $A^\FF\to \bot$. So assume $A^\FF$. We have to derive $\bot$. By the derivation of $B\to (B^\FF\to \bot)\to \bot$, it suffices to prove $B$ and $B^\FF\to \bot$. The formula $B^\FF\to \bot$ follows directly from $(A^\FF\to B^\FF)\to \bot$. To prove $B$, we use $A^\FF$ and the derivation of $A^\FF\to A$ to obtain $A$, and then $B$ follows from the assumption $A\to B$.
\smallskip

\underline{Subcase 2.3}:
Let $A\to B\in\CG$ with $A\in \CD$ and $B\in \CI$. By the induction hypothesis, we have derivations of $A^\FF \to A$ and $B\to B^\FF$. We again assume $A\to B$ and $(A^\FF \to B^\FF)\to \bot$. Our goal is $\bot$. By the assumption $(A^\FF \to B^\FF)\to \bot$, it is enough to prove $A^\FF \to B^\FF$. Thus, assume $A^\FF$. Then we obtain $A$ from the derivation of $A^\FF \to A$. Hence $A\to B$ gives $B$, and from $B\to B^\FF$ we obtain $B^\FF$. This proves $A^\FF \to B^\FF$ as required.
\medskip

\underline{Subcase 3}: Let $A\to B\in\CR$ with $A\in \CG$ and $B\in \CR$. By the induction hypothesis, we have derivations of $A\to (A^\FF\to \bot) \to \bot$ and $(\neg B^\FF\to \bot) \to B$. We have to show
\[
(\neg(A^\FF\to B^\FF)\to \bot) \to A\to B.
\]
Thus, assume $\neg(A^\FF\to B^\FF)\to \bot$ and $A$. Our goal is $B$. By the derivation of $(\neg B^\FF\to \bot) \to B$, it is enough to prove $\neg B^\FF\to \bot$. So assume $\neg B^\FF$. We have to derive $\bot$. Using the derivation of $A\to (A^\FF\to \bot) \to \bot$ together with the assumption $A$, it remains to prove $A^\FF\to \bot$. Hence, assume $A^\FF$. Our goal is again $\bot$. By the assumption $\neg(A^\FF\to B^\FF)\to \bot$, it suffices to prove $\neg(A^\FF\to B^\FF)$. But this follows directly from the assumptions $A^\FF$ and $\neg B^\FF$: indeed, any proof of $A^\FF\to B^\FF$ would yield $B^\FF$, contradicting $\neg B^\FF$. Hence we obtain $\bot$, as required.
\medskip 

\underline{Subcase 4}: Let $A\to B\in \CI$ with $A\in \CD$ and $B\in \CI$. By induction hypothesis we have derivations of $A^\FF\to A$ and $B\to B^\FF$ and our goal is
$$ (A \to B)\to A^\FF \to B^\FF.$$
The proof is similar to the proof of Case~1.1, with the roles of $A$,$B$ and $A^\FF$, $B^\FF$ interchanged.
\bigskip

\underline{Case $A\wedge B$:}
In this case, the properties \eqref{form:prop1}--\eqref{form:prop4} have to be transferred from $A$ and $B$ to $A\wedge B$. This is straightforward for all properties except \eqref{form:prop2} concerning $\CG$. In order not to make the proof unnecessarily long, we only consider this case.

Our goal is to derive
$$
A\wedge B \to (A^\FF\wedge B^\FF \to \bot) \to \bot.
$$
Thus, assume $A\wedge B$ and $A^\FF\wedge B^\FF \to \bot$.
By the induction hypothesis we have $A \to (A^\FF \to \bot) \to \bot$ and  $B \to (B^\FF \to \bot) \to \bot$.
From the assumption $A\wedge B$ we get $(A^\FF \to \bot) \to \bot$ and  $(B^\FF \to \bot) \to \bot$.
By $(A^\FF\to \bot)\to \bot$, it suffices to prove $A^\FF\to \bot$. So assume $A^\FF$. We now have to show $\bot$. By $(B^\FF\to \bot)\to \bot$, it is enough to prove $B^\FF\to \bot$. Therefore we assume $B^\FF$ and the goal is $\bot$. Together with the assumption $A^\FF$ we get $A^\FF\wedge B^\FF$. Using the assumption $A^\FF\wedge B^\FF \to \bot$ we obtain $\bot$, as required.\bigskip 

\underline{Case $\forall_xA$}:
In this case, note that the respective derivation obtained from the induction hypothesis applied to $A$ has no free assumptions and, in particular, does not make any assumption about $x$.

\underline{Subcase 1.1}: Let $\forall_x A\in \CD$ with $A\in \CD$. We have to show $$\forall_x A^\FF \to \forall_x A.$$ Thus, assume $\forall_x A^\FF$. We fix an arbitrary $x$ and show $A$. By instantiating the assumption $\forall_x A^\FF$ at $x$, we obtain $A^\FF$. By the induction hypothesis, we have a derivation of $A^\FF\to A$, and hence we obtain $A$. The only open assumption is $\forall_x A^\FF$, in which $x$ does not occur freely. Therefore, we may generalize over $x$ to get $\forall_x A$.
\smallskip

\underline{Subcase 1.2}: Let $\forall_x A\in \CD$ with $A\in \CR$. We again assume $\forall_x A^\FF$ and fix an arbitrary $x$ in order to show $A$. By the induction hypothesis applied to $A$, we have a derivation of $(\neg A^\FF \to \bot) \to A$. Thus, it is enough to prove $\neg A^\FF \to \bot$. So assume $\neg A^\FF$. We have to derive $\bot$. By instantiating the assumption $\forall_x A^\FF$ at $x$, we obtain $A^\FF$. Together with $\neg A^\FF$, this yields $\FF$, and by $\bot^+$ we obtain $\bot$, as required. Hence $\neg A^\FF \to \bot$ is proven and we may conclude $A$. Since $x$ is not free in the only open assumption $\forall_xA^\FF$, we obtain $\forall_x A$.
\medskip

\underline{Subcase 2.1}: Let $\forall_x A\in \CG$ with $A\in \CI$. By the induction hypothesis, we have a derivation of $\forall_x(A \to A^\FF)$. We have to show
\[
\forall_x A \to (\forall_x A^\FF\to \bot) \to \bot.
\]
Thus, assume $\forall_x A$ and $\forall_x A^\FF\to \bot$. Our goal is $\bot$. We use the assumption $\forall_x A^\FF\to \bot$; hence it remains to prove $\forall_x A^\FF$. So let $x$ be fixed but arbitrary. From the induction hypothesis we obtain $A\to A^\FF$. Moreover, from the assumption $\forall_x A$, instantiated at $x$, we obtain $A$. Therefore we get $A^\FF$. Since $x$ was arbitrary, this proves $\forall_x A^\FF$, and hence $\bot$, as required.
\smallskip

\underline{Subcase 2.2}: Let $\forall_x A(x)\in \CG$ with $x:\BB$ and $A(\true),A(\false)\in \CG$.
By the axiom
$$
\mathcal{C}^{x,A}:\forall_x\bigl(A(\true)\to A(\false)\to A(x)\bigr),
$$
the formula $\forall_x A$ is equivalent to
$
A(\true)\wedge A(\false).
$
Thus, this case is analogous to the case $A\wedge B$.

\medskip

\underline{Subcase 3}: Let $\forall_x A\in \CR$ with $A\in \CR$. We have to show
\[
(\neg\forall_x A^\FF \to \bot)\to \forall_x A.
\]
Thus, assume $\neg\forall_x A^\FF \to \bot$. We fix an arbitrary $x$ and prove $A$. By the induction hypothesis, we have a derivation of $(\neg A^\FF \to \bot)\to A$. Hence it is enough to prove $\neg A^\FF \to \bot$. So assume $\neg A^\FF$. We have to derive $\bot$. By the assumption $\neg\forall_x A^\FF \to \bot$, it suffices to prove $\neg\forall_x A^\FF$. Therefore, assume $\forall_x A^\FF$. Then, by instantiating this assumption at $x$, we obtain $A^\FF$. Together with $\neg A^\FF$, this yields $\FF$. Hence we have proved $\neg\forall_x A^\FF$, and therefore obtain $\bot$. This proves $A$, and since $x$ is not free in the only free assumption $\neg\forall_x A^\FF \to \bot$, we conclude $\forall_x A$.
\medskip

\underline{Subcase 4}: Let $\forall_x A\in \CI$ with $A\in \CI$. The proof in this case is similar to the proof of Case~1.1, with the roles of $A$ and $A^\FF$ interchanged.

\end{proof}

\begin{remark}
\label{Remark:AddST}
Let $A$ be any prime formula which is not $\bot$ and not $\FF$. We define 
\begin{align*}
S&:=\forall_x(\neg\neg A\to A)\\
T&:= (\forall_xA\to \bot) \to \bot.
\end{align*}
In \cite[Section 7.3.1]{schwichtenberg2012proofs} Schwichtenberg and Wainer explained that although $(S\to T)^\FF \to (S\to T)$ is derivable, $S\to T$  does not lie in $\CD$. Thus, the set $\CD$ can still be extended, and as an open problem it was asked
``to find a useful characterization of the class of formulas such that
$D^F \to D$ is provable intuitionistically''. The following proposition gives
a ``solution'' to this open problem by showing that it is not solvable:
\end{remark}
%
%
\begin{proposition}\label{prop:SetsUndec}
Assuming that $\PAO$ is consistent, for any of the properties (\ref{form:prop1}) - (\ref{form:prop4}) in Lemma \ref{lem:set_props} there is no recursive algorithm to decide whether a given formula $A$ has this property or not.
\end{proposition}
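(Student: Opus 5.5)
The plan is to reduce each of the four properties to a question already known to be undecidable. That question is either $\NAO\vdash C$ for closed $C\in\form_{\NAO}$, or the case-distinction property of Proposition~\ref{prop:casesUndecidable}. For each property I will give a recursive map $C\mapsto A_C$ into $\form_{\MAO}$ such that $A_C$ has the property if and only if the corresponding undecidable condition on $C$ holds. A decision procedure for the property would then decide that condition.

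First I record the basic undecidability fact. As in the proof of Proposition~\ref{prop:casesUndecidable}, \cite[Chapter 42]{smith2007introduction} and Theorem~\ref{thm:goedelgentzen} give, for every Turing machine $T$, a closed $A\in\form_{\NAO}$ with $\NAO\vdash A$ iff $T$ terminates. Hence $\{C\mid \NAO\vdash C\}$ is not recursive. The other tool is Lemma~\ref{lem:substbot} with $\XAO=\NAO$ and $S=\TT$ (or $S=\FF$). Since $C$ contains no $\bot$, this turns an $\MAO$-derivation of a formula built from $C$ and $\bot$ into an $\NAO$-derivation in which $\bot$ is replaced by $\TT$.

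The four reductions are as follows.

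\emph{Property \eqref{form:prop1}.} Take $D:=\bot\to C$, so $D^\FF=\FF\to C$. If $\NAO\vdash C$, then trivially $\MAO\vdash D^\FF\to D$. Conversely, suppose $\MAO\vdash(\FF\to C)\to\bot\to C$. Substituting $\bot:=\TT$ gives $\NAO\vdash(\FF\to C)\to\TT\to C$. Lemma~\ref{lem:efq} and $\mathsf{Truth}$ then yield $\NAO\vdash C$.

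\emph{Property \eqref{form:prop3}.} Take $R:=C$, so $R^\FF=C$. The formula $(\neg C\to\bot)\to C$ follows from $C$. Conversely, substituting $\bot:=\TT$ gives $(\neg C\to\TT)\to C$, hence $C$.

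\emph{Property \eqref{form:prop2}.} Take $G:=C\to\bot$, so $G^\FF=\neg C$. The property reads $(C\to\bot)\to(\neg C\to\bot)\to\bot$. This is exactly the property shown undecidable in Proposition~\ref{prop:casesUndecidable}, so nothing further is needed.

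\emph{Property \eqref{form:prop4}.} Take $I:=C\to\bot$, so the property reads $(C\to\bot)\to\neg C$. If $\NAO\vdash\neg C$ this is trivial. Conversely, substituting $\bot:=\TT$ gives $(C\to\TT)\to\neg C$, hence $\NAO\vdash\neg C$. To obtain undecidability I restrict to $C=\neg E$. Then $\neg C=\neg\neg E$. Moreover $\NAO\vdash\neg\neg E$ iff $\NAO\vdash E$ for the formulas $E$ coming from Theorem~\ref{thm:goedelgentzen}. Indeed, $\neg\neg E$ is (up to $\NAO$-equivalence, by Lemma~\ref{lem:GGequioverNA}) the negative translation of $\neg\neg\tilde A$, and $\PAO\vdash\neg\neg\tilde A$ iff $\PAO\vdash\tilde A$.

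The step needing the most care is the converse directions. I must check that Lemma~\ref{lem:substbot} applies with $\XAO=\NAO$ and that $C^\TT=C$ because $C$ contains no $\bot$. For \eqref{form:prop4} there is the extra detour through double negation, which is where consistency of $\PAO$ and the exact reduction are used. Consistency is what makes ``$\NAO\vdash A$ iff $T$ terminates'' meaningful for the chosen $A$. I would also note that all the maps $C\mapsto A_C$ are clearly recursive, which completes each reduction.
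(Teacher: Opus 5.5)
Your proposal is correct and follows the paper's proof essentially step for step. It uses the same four reductions: $D=\bot\to C$; $G=C\to\bot$ via Proposition~\ref{prop:casesUndecidable}; $R=C$; and $I=C\to\bot$ restricted to $C=\neg E$ with $E$ a G\"odel--Gentzen translation. The converse directions come, as in the paper, from Lemma~\ref{lem:substbot} with $S=\TT$.

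There are two minor differences. First, you get undecidability of $\NAO$-provability from the Turing-machine encoding, whereas the paper cites the undecidability of $\PAO$ directly and transfers it via Theorem~\ref{thm:goedelgentzen}. Second, consistency of $\PAO$ enters only through that undecidability fact, not through the double-negation step for \eqref{form:prop4} as your closing remark suggests.
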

\begin{proof}
We begin with the property $D^\FF\to D$.
Let $A$ be an arbitrary formula in $\NAO$, and define $D:=\bot \to A$. Then $D^\FF = \FF \to A$ is derivable by Lemma~\ref{lem:efq}. Hence, there is a proof of $D^\FF \to D$ in $\MAO$ if and only if $\bot \to A$ is derivable in $\MAO$. By Lemma~\ref{lem:substbot}, a derivation of $\bot \to A$ yields a derivation of $\TT \to A$ in $\NAO$, which is equivalent to a derivation of $A$ itself. In the other direction, any derivation of $A$ in $\NAO$ immediately gives a derivation of $(\FF \to A)\to \bot \to A$ in $\MAO$. 
Thus, a characterization of all formulas $D$ satisfying $D^\FF\to D$ would yield a characterization of all derivable formulas in $\NAO$. By Theorem~\ref{thm:goedelgentzen}, this would in turn yield a characterization of all derivable formulas in $\PAO$. Such a recursive characterization is impossible; see, for instance, \cite[Theorem~40.1]{smith2007introduction}.

For the property $G\to (G^\FF \to \bot)\to \bot$, let any formula $A$ in $\NAO$ be given and define $G:=A\to\bot$. Then, $G\to (G^\FF \to \bot)\to \bot = (A\to \bot) \to (\neg A \to \bot) \to \bot$ and the claim follows directly by Proposition~\ref{prop:casesUndecidable}.

For the property $(\neg R^\FF \to \bot) \to R$, let $A$ be a formula in $\NAO$ and define $R:=A$. Then, by Lemma  \ref{lem:substbot}, $\MAO \vdash (\neg R^\FF \to \bot) \to R$ leads to  $\NAO \vdash (\neg A \to \TT) \to A$, which is equivalent to $\NAO \vdash A$. In the other direction: a derivation of $A$ in $\NAO$ leads directly to a derivation of $(\neg R^\FF \to \bot) \to R$ in $\MAO$. Analogously to the first property we get, that such a recursive characterization is not possible.

For the property $I\to I^\FF$ we take an arbitrary formula $A$ and define $I:= A\to \bot$. Then, similar to the first property, a derivation of $I\to I^\FF$ in $\MAO$ is equivalent to a derivation of $\neg A$ in $\NAO$ by substituting $\TT$ for $\bot$. As any formula $A$ is equivalent $\neg\neg A$ in $\PAO$, $\PAO \vdash A$ is equivalent to $\NAO \vdash \neg \neg A^\GG$ 
by Theorem  \ref{thm:goedelgentzen}. Hence, a characterization of all $A$ with $\NAO \vdash \neg A$  could be applied to formulas of the form $\neg A^\GG$,
and would therefore decide whether $\NAO\vdash \neg\neg A^\GG$, which would lead to a characterization of all $A$ with $\PAO \vdash A$, which is recursively impossible.
\end{proof}
\section{\texorpdfstring{Refined $A$-Translation}{Refined A-Translation}}
\subsection{\texorpdfstring{The Theorem on the Refined $A$-Translation}{The Theorem on the Refined A-Translation}}
In this section, we now state the theorem of refined $A$-translation. The idea of the refined $A$-translation is to transform a weak existence statement of the form
\[
\forall_x(A\to \bot)\to \bot
\]
into a strong existence statement
$
\exists_x A
$
by replacing $\bot$ with $\exists_x A$. Then
$
\forall_x(A\to \exists_x A)
$
follows directly from $\exists^+$, and what remains is $\exists_x A$.

Note, however, that this is not yet an exact substitution, and is therefore not entirely correct as stated. The reason is that both $A$ and certain axioms or assumptions occurring in the proof of the weak existence statement may themselves contain $\bot$. 

Nevertheless, based on this idea and on the formal considerations from the previous two sections, we obtain the following theorem:
\begin{theorem}\label{Thm:ATrans}
Let $D$ be a formula such that
$
\MAO \vdash D^\FF \to D,
$
and let $G$ be a formula such that
$
\MAO \vdash \forall_x\bigl(G \to (G^\FF \to \bot) \to \bot\bigr).
$\\
Assume moreover that
$
\MAO \vdash D \to \forall_x(G\to\bot) \to \bot.
$
Then
\[
\HAO \vdash D^\FF \to \exists_x G^\FF.
\]
\end{theorem}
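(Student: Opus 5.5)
The plan is to substitute $\bot$ by the strong existence statement $S:=\exists_x G^\FF$, apply Lemma~\ref{lem:substbot} to each of the three given $\MAO$-derivations, and then combine the resulting $\HAO$-derivations by ordinary logic. Lemma~\ref{lem:substbot} turns each $\MAO$-derivation into an $\HAO$-derivation of the same formula with $\bot$ replaced by $S$. We first rename bound variables so that the $x$ bound in $\forall_x(\dots)$ does not clash with $S$; this is harmless, since $x$ is bound in $S$ anyway. Writing $A^S$ for the substituted formula and noting that $(A^\FF)^S=A^\FF$ (as $A^\FF$ contains no $\bot$), we obtain
\begin{align*}
&\HAO\vdash D^\FF\to D^S,\\
&\HAO\vdash \forall_x\bigl(G^S\to(G^\FF\to S)\to S\bigr),\\
&\HAO\vdash D^S\to\forall_x(G^S\to S)\to S.
\end{align*}

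The argument then runs as follows. Assume $D^\FF$. The first derivation gives $D^S$, and the third then reduces the goal $S$ to proving $\forall_x(G^S\to S)$. So fix $x$ and assume $G^S$. By the second derivation instantiated at $x$, it suffices to prove $G^\FF\to S$, that is, $G^\FF\to\exists_x G^\FF$. This is exactly the axiom $\exists^+$ applied with the term $x$. Generalizing over $x$ is allowed, because the only remaining open assumption is $D^\FF$. We must make sure $x$ is not free in $D$; if it were, we would rename $x$ in the hypothesis $\forall_x(G\to\bot)$, which changes nothing. This yields $S$ under the assumption $D^\FF$, hence $\HAO\vdash D^\FF\to\exists_x G^\FF$.

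The only delicate point is variable handling in the substitution. We need $\bigl(\forall_x(G\to\bot)\bigr)^S=\forall_x(G^S\to S)$ without capture, and the instance of the second derivation at the variable $x$ must really produce $G^S\to(G^\FF\to S)\to S$ with the same $x$ that $\exists^+$ then binds. Since $x$ occurs in $S$ only as a bound variable, no capture arises. Free variables of $G$ other than $x$ are handled uniformly by the renaming convention at the start of the proof of Lemma~\ref{lem:substbot}. Everything else is a direct chaining of implications.
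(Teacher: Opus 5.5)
Your proof is correct and uses the same ingredients as the paper: Lemma~\ref{lem:substbot} with $S:=\exists_x G^\FF$, the fact that $\bot$-free formulas such as $D^\FF$ and $G^\FF$ are unchanged by the substitution, and the axiom $\exists^+$ to supply $\forall_x(G^\FF\to\exists_x G^\FF)$. The only difference is the order. The paper first chains the three hypotheses inside $\MAO$ to obtain $D^\FF\to\forall_x(G^\FF\to\bot)\to\bot$ and then substitutes once, whereas you substitute into each hypothesis separately and chain in $\HAO$; this is equivalent, and your explicit remark about renaming $x$ if it is free in $D$ covers a point the paper leaves implicit.
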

\begin{proof}
First we show 
\begin{align}\label{form:MAGoal}
\MAO \vdash D^\FF \to \forall_x(G^\FF\to \bot) \to \bot.
\end{align}
To this end, we work in $\MAO$ and assume $D^\FF$ and
$\forall_x(G^\FF\to \bot)$.

Our goal is to derive $\bot$. From $\MAO \vdash D^\FF\to D$ and the assumption $D^\FF$, we obtain $D$. Hence, using
$
\MAO \vdash D \to \forall_x(G\to\bot) \to \bot,
$
it remains to prove $\forall_x(G\to\bot)$.\\
Thus, let $x$ be arbitrary and assume $G$, and we have to derive $\bot$.\\
By
$
\MAO \vdash \forall_x\bigl(G\to (G^\FF\to\bot)\to\bot\bigr),
$
it is enough to prove $G$ and $G^\FF\to\bot$. The formula $G$ is one of our assumptions. Moreover, $G^\FF\to\bot$ follows directly from the assumption
$
\forall_x(G^\FF\to\bot).
$
Hence, \eqref{form:MAGoal} is proved.

We now apply Lemma~\ref{lem:substbot} to the formula $\exists_x G^\FF\in\form_{\HAO}$ and to \eqref{form:MAGoal}. This yields
\[
\HAO \vdash
D^\FF \to
\forall_x\bigl(G^\FF\to \exists_x G^\FF\bigr)
\to
\exists_x G^\FF.
\]
As
$
\forall_x\bigl(G^\FF\to \exists_x G^\FF\bigr)
$
follows directly from the axiom $\exists^+$, we have
$
\HAO \vdash D^\FF \to \exists_x G^\FF,
$
as required.
\end{proof}
We see that the properties of $D$ and $G$ are those from Lemma~\ref{lem:set_props}. Thus, we directly obtain the following corollary, which is regarded in the sources~\cite{berger2002refined, schwichtenberg2012proofs,seisenberger2003constructive, trifonov2012analysis} as the refined $A$-translation:
\begin{corollary}\label{cor:ATrans}
Let $D\in \CD$ and $G\in \CG$  such that $\MAO \vdash D \to \forall_x (G\to\bot) \to \bot$. Then $$\HAO\vdash D^\FF \to \exists_x G^\FF.$$
\end{corollary}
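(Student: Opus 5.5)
The corollary follows directly from Theorem~\ref{Thm:ATrans}; it only remains to check that the theorem's two hypotheses on $D$ and $G$ hold. I will discharge them with Lemma~\ref{lem:set_props}.

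For $D\in\CD$, property \eqref{form:prop1} of Lemma~\ref{lem:set_props} gives $\MAO\vdash D^\FF\to D$ directly. This is exactly the first hypothesis of Theorem~\ref{Thm:ATrans}.

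For $G\in\CG$, property \eqref{form:prop2} gives a derivation of $G\to(G^\FF\to\bot)\to\bot$ in $\MAO$. This derivation has no free assumptions, so no assumption mentions $x$, and I can apply $\forall$-introduction to obtain $\MAO\vdash\forall_x\bigl(G\to(G^\FF\to\bot)\to\bot\bigr)$. One small point needs care here. Lemma~\ref{lem:set_props} is stated for every $G\in\CG$, and $G$ may contain $x$ free. Since $G$ is fixed in the statement, no instance of it is needed; only the universal closure over $x$ of the already closed-assumption derivation is used.

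With both hypotheses in hand, the assumption $\MAO\vdash D\to\forall_x(G\to\bot)\to\bot$ is exactly the remaining premise of Theorem~\ref{Thm:ATrans}. That theorem then yields $\HAO\vdash D^\FF\to\exists_x G^\FF$. I expect no real obstacle: the only nontrivial step is the generalization over $x$, and that is justified because a derivation without free assumptions can always be universally generalized.
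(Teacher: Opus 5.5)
Your proposal is correct and matches the paper's proof: apply Theorem~\ref{Thm:ATrans}, discharging its two hypotheses with properties \eqref{form:prop1} and \eqref{form:prop2} of Lemma~\ref{lem:set_props}. As in the paper, the $G$-property may be universally generalized over $x$ because the lemma's derivation has no free assumptions.
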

\begin{proof}
This follows directly from Theorem \ref{Thm:ATrans} and Lemma \ref{lem:set_props}. Note that the property for $G\in\CG$ can be universally quantified, since the derivation in Lemma~\ref{lem:set_props} has no free assumptions.
\end{proof}
\begin{remark}
Note that in the original sources (e.g.~\cite{berger2002refined, schwichtenberg2012proofs, trifonov2012analysis}), the more general form
\[
\MAO \vdash D_0 \to \dots \to D_{n-1}
\to
\forall_x(G_0\to \dots \to G_m \to \bot)
\to \bot
\]
for arbitrary $n,m\geq 0$ was considered. In our setting, however, we have conjunction available in $\NAO$. Therefore, this form follows directly from the corollary above by taking
\[
D := \begin{cases}
D_0 \wedge \dots \wedge D_{n-1} &\text{if } n>0\\
\TT &\text{else}
\end{cases}
\]
and
$
G := G_0 \wedge \dots \wedge G_m.
$
\end{remark}
\subsection{Applications and an Assessment of the Limits of the Recursive Characterization}
In this section, we discuss applications of the refined A-translation, with particular emphasis on the implications of the limitations established in the previous section.

The following table lists several applications of the refined $A$-translation together with the corresponding references. No claim of completeness is made. However, the table shows that the refined A-translation is applicable to typical examples in constructive mathematics.  Most of these applications were implemented in the proof assistant Minlog, thereby enabling automatic program extraction. 

\begin{table}[ht]
\centering
\begin{tabular}{p{0.5\textwidth} p{0.4\textwidth}}
\hline
\textbf{Application of the refined $A$-translation} & \textbf{Reference} \\
\hline
Existence of the greatest common divisor & \cite{berger1996greatest}\\
Higman's lemma & \cite[Chapter 6]{seisenberger2003constructive} \\
Existence of the Fibonacci graph &\cite[Sec.~6.1]{berger2002refined}, \cite[Sec.~7.3.4]{schwichtenberg2012proofs} \\
Well-foundedness of the natural numbers &\cite[Sec.~6.2]{berger2002refined}, \cite[Sec.~7.3.5]{schwichtenberg2012proofs} \\
The $hsh$-theorem & \cite[Sec.~7.3.6]{schwichtenberg2012proofs}\\
Stolzenberg's example, infinite tape example  & \cite[Sec.~5.1, 6.1]{ratiu2011refinement}, \cite{seisenberger2008programs}, \cite[Sec.~3.1]{trifonov2012analysis} \\
Existence of the integer root & \cite[Sec.~3.2]{trifonov2012analysis}\\
The pigeonhole principle &\cite[Chapter~5]{ratiu2011refinement}, \cite[Sec.~3.3]{trifonov2012analysis}  \\
Erd\H{o}s--Szekeres theorem & \cite[Sec.~6.2]{ratiu2011refinement}\\
A special case of Dickson's lemma & \cite{berger2001warshall}, \cite[Sec.~7]{ratiu2011refinement} \\
\hline\noalign{\vskip 5pt}
\end{tabular}
\caption{Applications of the refined $A$-translation and related references}
\label{tab:a-translation-applications}
\end{table}

At first sight, one might say that Theorem~\ref{Thm:ATrans} is the better version, since it is more general, and that Corollary~\ref{cor:ATrans} is only a special case of it. However, Corollary~\ref{cor:ATrans} has the advantage that $D\in\CD$ and $G\in\CG$ are decidable conditions. Thus, in this case, one directly obtains computational content by a recursive procedure, which makes the refined $A$-translation particularly well suited for computer implementation. By contrast, the computational content of Theorem~\ref{Thm:ATrans} is contained in the proofs of the two required properties. As we can see from the table above, the version in Corollary \ref{cor:ATrans} is already quite sufficient. 
The counterexample mentioned in Remark \ref{Remark:AddST} was also taken up by Ratiu in \cite[pp.~50-51]{ratiu2011refinement}. There she writes that, so far, no relevant case study of this form is known. It therefore seems that the existing form of the formula classes has in fact been sufficient so far.

However, this does not mean that we should stop extending these classes.
The formulas $S$ and $T$ in Remark~\ref{Remark:AddST} were chosen such that
$S\to T$ is not in $\CD$, although there is a proof of
\[
(S^\FF\to T^\FF)\to S\to T.
\]
Whether a formula has the form $S\to T$ can be checked recursively. Hence, formulas of this form can be added to the definition of $\CD$.
If one finds further, or larger, recursively definable classes of formulas satisfying one of the properties from Lemma~\ref{lem:set_props}, then these classes could be added to the corresponding class. In this way, one can build a repository of recursively checkable formula classes which, for a given formula, checks which of the sets it belongs to and, in the successful case, provides a proof. 

This suggests the following new open problem: find a meaningful application of Theorem \ref{Thm:ATrans} which is not already covered by Corollary \ref{cor:ATrans}, and which would therefore motivate a genuine extension of the classes $\CD$, $\CG$, $\CR$, and $\CI$.

\section{Remarks on the Rust Prover}
In the final section, we briefly discuss the insights we gained from implementing the theory $\MAO$ in Rust. The implementation was originally developed in order to test the individual classes of formulas. Interestingly, there does not seem to be a major proof assistant written in Rust so far. For this reason, we briefly describe our experiences with the implementation. With this discussion, we may be able to inspire further development of proof assistants in Rust. The precise implementation can be found in the GitHub repository \cite{wiesnet2026checker}. We only assume the basics of Rust, but no deeper understanding of the language. Readers who would like to learn more about Rust and study the language in greater detail are referred to the official Rust book \cite{klabnik2025rust}.

\subsection{Data Types}
The strong type system of Rust was very helpful when writing the proof assistant and is therefore probably one of the greatest advantages that Rust, as a programming language, offers for implementing a proof assistant.

\subsubsection{Data Structures: \texttt{enum} and \texttt{struct}}
The individual syntactic categories, such as types, terms, formulas, and proofs, can be defined particularly conveniently using \texttt{enum} and \texttt{struct}. In particular, the pattern matching associated with \texttt{enum} is especially useful for recursively defined objects. Pattern matching plays an important role in almost every proof assistant. Agda, for example, is particularly well known for this.
The question of the extent to which Rust should be regarded as an object-oriented language is controversial. For the development of a proof assistant, however, Rust provides several features that are useful from an object-oriented perspective.

In addition to \texttt{enum}, \texttt{struct} was particularly useful for defining terms and proofs, especially in combination with associated methods and encapsulation. This makes it possible to hide the internal structure of terms and proofs, so that these objects can only be constructed through functions that check whether the corresponding construction or proof rule has been applied correctly.

Polymorphism, in particular through traits, was not substantially used in the current implementation. Nevertheless, it seems plausible that this feature could be useful in future extensions. For example, types and formulas, as well as terms and proofs, share certain structural similarities, and one could try to capture common operations by means of suitable traits. In this way, Rust's trait system might help to design a more general proof assistant framework that can be instantiated for different formal systems or metatheories. A challenge that did not arise in the case of $\MAO$ is the
treatment of dependent types. The question of how to define them
efficiently in Rust remains open and is very interesting. It is possible
that traits could be useful for this purpose. This, however, is beyond the scope of the present implementation and remains a topic for future research.

\subsubsection{Partial Constructions: \texttt{Option} and \texttt{Result}}
Rust's types \texttt{Option} and \texttt{Result} were also useful for implementing partial constructions. 
\texttt{Result<T,E>} makes potential errors explicit. Here, \texttt{T} is any data type and \texttt{E} is an Error Datatype. It is particularly suitable for a proof assistant, where term-construction rules or proof rules are only applicable under certain conditions. For example, if two terms $s,t$ are given, then the term application $st$
only yields a meaningful new term if $s$ has a function type
$\tau \to \rho$ (for some types $\tau$ and $\rho$) and $t$ has the type $\tau$. In this case, the
resulting term can be returned. In all other cases, one obtains a
\texttt{TypeError}.

For the definition of the sets $\CD,\CG,\CR$ and $\CI$ in the Rust
implementation, \texttt{Option<T>} was particularly useful.
The type \texttt{Option<T>} either returns an object of data type \texttt{T}
or \texttt{None}. In this way, the sets could be defined as functions
which take a formula as input and either return \texttt{None}, if the
formula does not belong to the set, or directly return a proof of the
corresponding properties \eqref{form:prop1}--\eqref{form:prop4}. Thus, Definition~\ref{defi:DGRI} and Lemma~\ref{lem:set_props} were
implemented in one function.

\subsection{Ownership, Borrowing, and Smart Pointers for Provers}

Rust's ownership system and borrow checker are, in principle, very useful
features. They provide strong guarantees about memory safety and prevent
many common programming errors. However, in the implementation of a proof
assistant they can also make the code more verbose.

This is especially visible for recursively defined objects such as terms,
formulas, and proofs. Since such objects do not have a statically known
finite size, their definition in Rust requires the use of smart pointers,
in our case \texttt{Box}. As a consequence, the code becomes more explicit
and sometimes more cumbersome than in programming languages with automatic memory management such as Python, or functional programming languages such as Haskell and OCaml.

In some parts of the implementation, for instance in the definition of the
sets $\CD,\CG,\CR$ and $\CI$, additional auxiliary functions were introduced
in order to make the code shorter and more readable. This made the final
implementation considerably clearer. In particular, together with suitable
comments, the corresponding proof constructions became much easier to check.

On the other hand, this design also led to the use of \texttt{clone} in
more places than would probably be necessary in a fully optimized
implementation. This may have a negative effect on performance, especially
when large recursive structures are copied. For the relatively small
applications considered here, this was not a serious problem.

It is conceivable that a more careful choice of smart pointers, including
alternatives to \texttt{Box} such as \texttt{Rc} or \texttt{Arc}, possibly
combined with parallel computation, could lead to a significantly more
efficient implementation. Investigating such optimizations, however, was
not the goal of the present implementation.

Overall, these features turned out to be a mixed blessing. In the present
implementation, which is only a relatively small prover, the
ownership system and the borrow checker often made the code more verbose.
Thus, they were sometimes more of an obstacle than an advantage. For
larger proof assistants, however, where performance and memory safety are
more important, precisely these features may become a significant
advantage.

\subsection{Further Noteworthy Features of Rust}
Finally, we mention two further features of Rust that were very useful in
the implementation, presumably not only in the development of a proof
assistant.

\subsubsection{Compile-Time Safety and Compiler Support}
The type safety of Rust, which is checked at compile time, made the
implementation considerably more pleasant. For example, when pattern
matching on an \texttt{enum}, the compiler checks whether all cases have
been covered. The same also applies to handling the data types \texttt{Option} and \texttt{Result}. Another advantage is that different syntactic
categories can be represented by different Rust types. Thus, types, terms,
formulas, and proofs are kept separate by the programming language itself,
which prevents many accidental misuses. Moreover, the error messages of the Rust compiler are usually very informative.
With a suitable editor, they can be displayed even more clearly.

\subsubsection{Modularity and Testing}
Another useful aspect of Rust is its module system. It makes it possible
to organize the implementation into several files and directories, for
example by putting the definitions of types, terms, formulas, and proofs
into separate modules. This modular structure is particularly useful for a
proof assistant, since the different syntactic categories and operations
can be developed and checked independently.

Closely related to this is Rust's good support for testing. Tests can
be placed directly inside the corresponding source files, usually in a
separate test module. This was very helpful in the present implementation.
This is particularly useful for a proof
assistant, since constructors, substitutions, proof rules, and other basic
operations must be tested systematically.

\bibstyle{plain}
\begingroup
\emergencystretch=1em
\printbibliography
\endgroup
\end{document}